\providecommand{\keywords}[1]
{
  \small	
  \textbf{\textit{Keywords : }} #1
}
\newtheorem{theorem}{Theorem}[section]
\newtheorem{definition}[theorem]{Definition}
\newtheorem{proposition}[theorem]{Proposition}
\newtheorem{lemma}[theorem]{Lemma}
\newtheorem{remark}[theorem]{Remark}
\newtheorem{corollary}[theorem]{Corollary}
\newtheorem{example}[theorem]{Example}
\newtheorem{remarks}[theorem]{Remarks}
\DeclareMathOperator*{\Dom}{Dom\,}
\DeclareMathOperator*{\Ima}{Im\,}
\begin{document}

	\title{Dowker Complexes and filtrations on self-relations}
	\author{Dominic Desjardins Côté}
	
	\maketitle
	\bibliographystyle{plain}	
	\begin{abstract}
		Given a relation on $ X \times Y $, we can construct two abstract simplicial complexes called Dowker complexes. The geometric realizations of these simplicial complexes are homotopically equivalent. We show that if two relations are conjugate, then they have homotopically equivalent Dowker complexes. From a self-relation on $ X $, this is a directed graph, and we use the Dowker complexes to study their properties. We show that if two relations are shift equivalent, then, at some power of the relation, their Dowker complexes are homotopically equivalent. Finally, we define a new filtration based on Dowker complexes with different powers of a relation. 
	\end{abstract}		
	
	\setlength{\parskip}{1em}	
	\keywords{Dowker complex, relation, filtration, graph theory, shift equivalence}
	
	\section{Introduction}

		We can use multivalued maps to study dynamical systems \cite{arMvMap}. The idea is to use Conley index \cite{boConleyIndex} on upper semi-continuous multivalued maps. In applications, it can be hard to study a dynamical system. We can use a model that seems to fit data, but it can be a challenge to find it. Another way is to discretize the continuous space and use to multivalued maps to approximate the underlying dynamical system \cite{arAppDataMv} \cite{arPHDSzym}. Another approach is to use combinatorial structures. To name a few, we can use combinatorial vector fields from Forman \cite{RobForCVF} \cite{towFormalTie} \cite{arFinDataVecDom}. Moreover, a generalization was proposed by Mrozek called combinatorial multivector fields \cite{arMulVecConMorFor} \cite{arCombVsClass}. Finally, others proposed to use the distributive lattices to compute attractors on finite data \cite{arLatAtt1} \cite{arLatAtt2} \cite{arLatAtt3}.

		Multivalued maps can be restrictive. In  \cite{towFormalTie}, authors generalize them to partial multivalued maps. But a partial multivalued map is equivalent to a relation. Some advancements were done in \cite{arSzym} by using the Scymczak category of finite sets where objects are sets and morphisms are relations. The Szymczak category \cite{arMainSzym} captures the essence of index pairs and index maps \cite{boConleyIndex} which is the core of the theory of Conley index. So one motivation of this paper is to continue to develop the theory of relations, and it can be used to study dynamical system with finite data.

		Our main object is a relation which is a subset of the cartesian product of two sets $ X $ and $ Y $. We can define two different abstract simplicial complexes on a relation. For the first simplicial complex, we fixed a value $y \in Y$. For all elements in $ X $, that they are related to $y$, they will span a simplex together. For the second one, we reverse the role. We fixed a value $x \in X$. For all elements in $Y$, that they are in relation with $x$, they will span a simplex together. They are called Dowker complexes \cite{arDowkerCmp}. An important result is the Dowker's Theorem. It says that the geometric realization of these Dowker complexes are homotopically equivalent. The Dowker's Theorem is quite useful in applications. To name a few examples, we can use Dowker complexes to study signal coverage \cite{topoLandSignal}, to find errors in relation of programs and files \cite{arDowTopoDifTest}, to study the privacy of information \cite{topoPrivacy} and in social studies \cite{arQanalAppSoc}. For the last one, this method is called Q-analysis which is developed by Atkins \cite{arQanalAtkin}. The idea of Q-analysis is to study the $q$-connectivity and the $q$-tunnel of the Dowker complexes. Our two main inspirations for definitions come from these articles \cite{arGeoRel} and \cite{arSzym}.

	They are many contributions in this article. First, conjugate relations have homotopically equivalent Dowker complex. If two relations are shift equivalent with lag $l$, then at a certain power their Dowker complexes are homotopically equivalent. We can define a filtration on relation based on the Dowker complex at different powers of a finite self-relation. Moreover, this can be computed in finite time. We can also compute the $0$th homology of a high enough power with the connected components of the graph induced by the relation.
		
		 This article goes as follows. In section 2, we remind some concepts and definitions on finite relation, graph, simplicial complex, Dowker complexes and finally the famous Dowker's Theorem. In section 3, we define right morphism and left morphism. If there exists a right or left morphism  between two relations, then there is an inclusion from one of the Dowker complexes. Moreover, we show that if there exists a conjugacy between two relations, then their Dowker complexes are homotopically equivalent. In section 4, we generalize the idea of right and left morphism for multivalued right and multivalued left morphism. We show two important properties needed for the definition of a filtration. The Dowker complex of a certain power of a relation is include in the Dowker complex of the same relation with a higher power. For some finite relations at certain a power $ j $, every other Dowker complexes of the same relation at power higher than $j$ are the same. We call it the stabilization of the Dowker complex. We show that shift equivalence between relations have homotopically equivalent Dowker complex at some power. In section 5, we can define a filtration on the Dowker complexes of different powers of a relation under some simple conditions by using the two properties in section 4. We can use persistent homology on these filtration to extract topological features of the Dowker complex. If a relation is acyclic, then we have that the number of connected components of the graph associated to the relation up to a certain power is equal to the dimension of the $0$th homology. It can be generalized to the class of simple relations. We also have a similar result for strongly connected relations.
		
	\section{Preliminaries}
	
		\subsection{Finite Relations}	
		
		Let $ X $ and $ Y $ be finite sets. We define a relation as a subset of $ X \times Y $. Let $ (x,y) \in R \subset X \times Y $, we denote by $ x R y $ or by $ y \in R(x) $. We define the composition of relations as follows. Let $ R_1 \subset X \times Y $ and $ R_2 \subset Y \times Z $. 
		
		\begin{equation}
		R_2 \circ R_1 := \{ (x,z) \in R_2 \circ R_1 \mid \exists y \text{ such that } x R_1 y \text{ and } y R_2 z \}.
		\end{equation}
		
		We define the inverse relation by swapping the sets of a relation.
		
		\begin{equation}
			R^{-1} := \{ (y,x) \in Y \times X \mid y \in R(x) \}
		\end{equation}
	
		If a relation is a subset of $ X \times X $, then we say it's a self-relation on $X$. We define the power of a self-relation as follows : 
		
		\begin{equation*}
			R^n := 	\begin{cases}
				R \circ R^{n-1} & n > 0 \\
				Id_X & n = 0 \\
				R^{-1} \circ R^{n+1} & n < 0
			\end{cases}
		\end{equation*}
	
	The domain and the image for a relation $ R \subset X \times Y $ are :
	
	\begin{gather}
		\Dom R := \{ x \in X \mid \exists y \text{ such that } (x, y) \in R \} \\
		\Ima R := \{ y \in Y \mid \exists x \text{ such that } (x, y) \in R \}.
	\end{gather}
	
	We can see relations as partial multivalued maps. If $ \Dom R = X $, then we say that the relation is a multivalued map. A relation is injective, if for all $x_1, x_2 \in X $, $R(x_1) = R(x_2) $ implies that $ x_1 = x_2 $. A relation is surjective if $ \Ima R = Y $. Moreover, a map $ f : X \to Y $ induces a relation where $(x, f(x)) \in R$. Without ambiguity, we can compose maps and relations together to obtain a new relation.
	
	\begin{definition}	\label{dfnEventPeriod}
		Let $ R $ be a self-relation on $X$. Let $j$ be the least positive integer such that :
		\begin{equation*}
			R^{j} = R^{j + p} \text{ for some } p > 0.
		\end{equation*}
		We say that $ j $ is the index and the least $ p > 0 $ is the period. If $j = 1$, then $R$ is periodic. A pair $(j, p)$ is the eventual period of $R$ with index $j$ and period $p$.
	\end{definition}	

	In other words, the period $p$ will eventually be a period for $R$.

	We sometimes use matrices with values in $ \lbrace 0, 1 \rbrace $ to represent relations. Let $ R \subset X \times Y $ be a relation with $ card(X)  = m $ and $ card(Y) = n $. The matrix $ M_{m \times n} $ have a value $ 1 $ at $ M_{i, j} $ if $ x_i R y_j $ otherwise the value is $0$. It can be called relation matrix, Boolean relation matrix, binary relation matrix, binary Boolean matrix, $(0, 1)$-Boolean matrix and $(0 ,1)$-matrix. For more information on Boolean matrix theory, we refer to the book \cite{boolMatTheory}.
		
	We say that a self-relation $R$ on $X$ has a cycle at $ x $ if and only if there exists an $ n \in \mathbb{N} $ such that $ x R^n x $. We say $R$ as a fixed point at $ x $, if $ n = 1 $. If a relation has no cycle at $x$ for all $ x $ with period $ n > 1 $, then the relation is acyclic. A cycle is a sequence $ x_1, x_2, \ldots, x_n $ such that $ x_1 = x_n$ and $ x_i R x_{i + 1} $. A self-relation $ R $ on $ X $ is simple if for any two cycles are either disjoints or equals.	
 	
	\subsection{Graphs}
	
		In this subsection, we remind the definition of a graph and some notations. 
		
		\begin{definition}
			A directed graph $ G $ is a pair $(E, V) $ where $ V$ is the set of vertices $ V $ and $ E$ is the subset $V \times V $ the set of edges.	
		\end{definition}		
		
		A relation can also be seen as a directed graph. If $ R $ is a self-relation on $X$, then $X$ is the set of vertices and the set of edges $ E = R $. This graph has at most one directed edge from the vertex $A$ to the vertex $B$, and we also allow a self-loop on vertices. We note $ G_R $ the graph induced by a self-relation $R$.
		
		Let $ x, y \in V $. There is a $(x, y)$-path, if there exists a sequence of edges $ e_1, e_2, \ldots, e_n \in E $ that connect $ x $ to $y$ without following the direction of edges. We can define an equivalence relation on vertices of $G$. If there is a path between two vertices $ x $ and $ y $, then $ x $ and $ y $ are in the same class of equivalence. For a graph $ G $, we say the number of connected components is the number of class equivalences of the relation of paths. We say $ G $ is connected if there is only one connected component.
		
		 If the sequence of edges of a $(x, y)$-path follows the direction of edges of the graph, then we say it's a $(x, y)$-walk. We can also define an equivalence relation with a walk between vertices. If there is a walk from $ x $ to $ y $ and a walk from $ y $ to $ x $, then $x $ and $ y $ are in the same equivalence class. This is the class of strongly connected components. For a graph $ G $, we say the number of strongly connected components is the number of class equivalence of the relation of walks. We say $ G $ is strongly connected if there is only one strongly connected component. 
					
	\subsection{Simplicial Complexes and Dowker Complexes}
	
		In this subsection, we will discuss simplicial complex, Dowker complex and the Dowker's Theorem. For more information about simplicial complex, we suggest to read \cite{AlgTopo}. We do not present filtration and persistent homology, but we refer to \cite{boCompTopo}.
		
		An abstract simplicial complex is a set $K$ that contains finite non-empty sets such as  if $ A \in K $, then for all subsets of $A$ are also in $K$. For further examples, we use geometric simplex. A geometric $n$-simplex is the convex hull of a geometrically independent sets of vertices $ \lbrace v_0, v_1, \ldots, v_n \rbrace \in \mathbb{R}^N $. This is the set of $ x \in \mathbb{R}^N $ such as $ x = \sum_{i=0}^n t_i x_i $ and $ 1 = \sum_{i=0}^n t_i $ where $ t_i \geq 0 $ for all $i$. We denote an $n$-simplex by $ [ v_0, v_1, \ldots, v_n ] $ is the simplex spanned by the vertices $ v_0, v_1, \ldots, v_n $. Any simplex spanned by the subsets of $ \lbrace v_0, v_1, \ldots, v_n \rbrace $ are called faces and denote by the symbol $ \leq $. A simplicial complex is a collection of simplices for all $ \sigma \in K $, if $ \tau \leq \sigma $ then $ \tau \in K $ and if $ \sigma_1 \cap \sigma_2 = \tau $, then $ \tau $ is either the empty set or $ \tau $ is a face of $ \sigma_1 $ and $ \sigma_2 $. We say that $ L \leq K $ if $L$ is a sub-complex of $K$. A simplicial complex is contractible if its homology is equivalent to a point. Given an abstract simplicial $K$, we can define a geometric simplicial complex and $|K|$ call the geometric realization of $K$. We call $0$-simplices vertices and $1$-simplices edges. The closure of a simplex $ \sigma $ is the set of all the faces of the simplex. We denote it by $ cl(\sigma)$. We need one more definition related to simplicial complexes. It will be useful in some proofs.
		
		\begin{definition}
			A simplicial complex $K$ is edge-connected, if for any two vertices $ x $ and $ y $ there is a sequence of edges $ e_1, e_2, \ldots, e_n $ such that $ x \in e_1 $, $ y \in e_n $ and $ cl(e_i) \cap cl(e_{i+1}) \neq \emptyset $ for all $ i = 1, 2, \ldots, n-1 $.
		\end{definition}
		
		We have that the simplicial complex is connected if and only if it is edge-connected if and only if $ H_0 $ is dimension $1$ \cite{boCompHomo}.	
		
		Now we explain how to construct abstract simplicial complexes from a relation which are called Dowker complexes. Let $ R \subset X \times Y $ be a relation and $ X, Y $ be two finite sets. There are two ways to construct the Dowker complex from a relation.
	
	\begin{definition} \label{dfnDowHor}
		Let $ R \subset X \times Y $ be a finite relation and $ K_R $ be the Dowker complex. A simplex $ [x_1, x_2, \ldots, x_n] \in K_R $ if and only if $ \exists y \in Y $ such as $ x_i R y $ for all $ i =1,2, \ldots, n $.
	\end{definition}
	
	We have an analogous construction.	
	
	\begin{definition} \label{dfnDowkerCmp}
		Let $ R \subset X \times Y $ be a finite relation and $ L_R $ be the Dowker complex. A simplex $ [y_1, y_2, \ldots, y_m] \in L_R $ if and only if $ \exists x \in X $ such as $ x R y_i $ for all $ i = 1, 2, \ldots, n $.
	\end{definition}
	
	We denote $ [ x_1, x_2, \ldots x_n ] = \sigma_y \in K_R $ if and only if $ x_i R y $ for all $ i = 1, 2, \ldots, n $. We use $ y $ as an index for $\sigma_y $ to note that all vertices of $ \sigma_y $ are in $ R^{-1}(y) $. We use the same notation for $ \sigma_x \in L_R $ but the vertices are in $ R(x)$.	
	
	By using the matrix notation, we can use rows and columns to build the simplices. The columns are for $K_R$ and the rows are for $ L_R $.
	
	\begin{example}	\label{exDwkCMp}
		Let $ R \subset X \times Y $  be a finite relation.
		
		\begin{equation}
			R := \begin{bmatrix}
			1 & 0 & 0 & 0 & 1 \\
			0 & 0 & 1 & 1 & 0 \\
			1 & 0 & 0 & 0 & 1 \\
			1 & 1 & 0 & 0 & 0
			\end{bmatrix}
		\end{equation}		 
		
		The first column gives the $2$-simplex $ [x_1, x_3, x_4]  $. The third and the fourth column give the $0$-simplex $ [x_2] $. The second and the fifth column do not add new simplices. We obtain the simplicial complex $ K_R := \lbrace [x_1, x_3, x_4], [x_2] \rbrace $.

		The first row adds a $1$-simplex $ [y_1, y_5] $ to $L_R$. The second row gives a $1$-simplex $ [y_3, y_4] $. The final row adds a $1$-simplex $[ y_1, y_5 ]$. We obtain the simplicial complex $L_R = \lbrace [y_1, y_5], [ y_3, y_4 ], [y_1, y_2] \rbrace $.

		We obtain that $|K_R|$ and $|L_R|$ have two connected components and no higher dimension cycle.
		
		\begin{figure}
  			\center
  			\subfigure[ Geometric realization of the Dowker complex $ K_R $. ]{
   				\includegraphics[height=6cm, width=5cm, scale=1.00, angle=0 ]{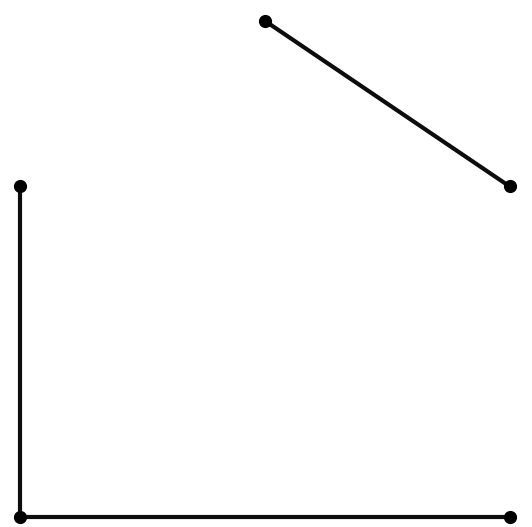}
  			}
  			\,
  			\subfigure[ Geometric realization of the Dowker complex $L_R$. ]{
   				\includegraphics[height=5cm, width=5cm, scale=1.00, angle=0 ]{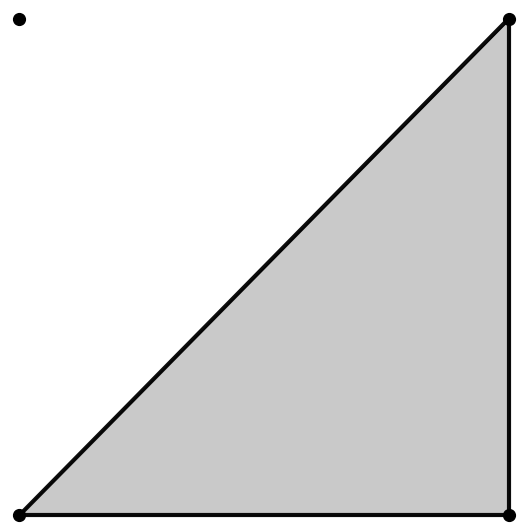}
            }
            \caption{These are geometric realizations of the Dowker complexe in Example \ref{exDwkCMp}. They are homotopically equivalent. }
        \end{figure}

	\end{example}	
	
	The next theorem links to the homotopy between $ |K_R| $ and $ |L_R| $. 
	
	\begin{theorem}[Dowker's Theorem] \label{thmDowker}
		Let $R \subset X \times Y$ be a relation and let $K_R$ and $L_R$ be the associated Dowker complexes. Then, the polyhedra $|K_R|$ and $|L_R|$ are homotopy equivalent.
	\end{theorem}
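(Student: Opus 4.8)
The plan is to exhibit $|K_R|$, up to homotopy, as the nerve of a natural cover of $K_R$ by closed simplices, and then to recognize that nerve as $L_R$ itself. Since $X$ and $Y$ are finite, $K_R$ is a finite simplicial complex and every cover and intersection appearing below is finite, so no point-set subtleties arise. I would run the argument once, in the direction described.

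First, for each $y \in \Ima R$ let $\sigma_y$ be the simplex spanned by $R^{-1}(y) \subseteq X$, so that $cl(\sigma_y)$ is a subcomplex of $K_R$. By Definition \ref{dfnDowHor}, every simplex of $K_R$ is a finite subset of some $R^{-1}(y)$, hence
\begin{equation*}
K_R \;=\; \bigcup_{y \in \Ima R} cl(\sigma_y),
\end{equation*}
so the $cl(\sigma_y)$ form a cover of $K_R$ by subcomplexes. The key elementary fact is that a simplex $\tau$ lies in $cl(\sigma_{y_0}) \cap \cdots \cap cl(\sigma_{y_n})$ if and only if the vertex set of $\tau$ is contained in $R^{-1}(y_0) \cap \cdots \cap R^{-1}(y_n)$. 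Consequently this intersection is empty when $\bigcap_i R^{-1}(y_i) = \emptyset$, and otherwise it is the closure of a single simplex, whose geometric realization is a topological simplex and hence contractible. Thus $\{ cl(\sigma_y) \}_{y \in \Ima R}$ is a good cover of $K_R$, in the sense that all of its nonempty finite intersections have contractible realization.

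Next I would invoke the Nerve Lemma in its simplicial form --- a complex covered by subcomplexes all of whose nonempty finite intersections are contractible is homotopy equivalent to the nerve of that cover --- to conclude $|K_R| \simeq |\mathcal{N}|$, where $\mathcal{N}$ is the nerve of $\{ cl(\sigma_y) \}_{y \in \Ima R}$. It then remains to identify $\mathcal{N}$ with $L_R$. A finite set $\{ y_0, \dots, y_n \} \subseteq \Ima R$ is a simplex of $\mathcal{N}$ if and only if $cl(\sigma_{y_0}) \cap \cdots \cap cl(\sigma_{y_n}) \neq \emptyset$, which by the previous paragraph holds if and only if $R^{-1}(y_0) \cap \cdots \cap R^{-1}(y_n) \neq \emptyset$, i.e. if and only if there is an $x \in X$ with $x R y_i$ for all $i$; by Definition \ref{dfnDowkerCmp} this is exactly the condition that $\{ y_0, \dots, y_n \}$ be a simplex of $L_R$. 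Hence $\mathcal{N} = L_R$ as abstract simplicial complexes, and therefore $|K_R| \simeq |L_R|$.

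The step that needs the most care is the appeal to the Nerve Lemma: one must use the version for a simplicial complex (or polyhedron) covered by subcomplexes whose nonempty finite intersections are contractible, rather than the open-cover statement, and one should record that the finiteness of $X$ and $Y$ makes the index family finite so there is nothing to check on that side. If one prefers a self-contained proof that does not cite the Nerve Lemma, the alternative is Dowker's original argument: form the auxiliary complex with vertex set $\{ (x,y) : x R y \}$ in which $\{ (x_0,y_0), \dots, (x_n,y_n) \}$ spans a simplex exactly when $x_i R y_j$ for all $i,j$, and show by a carrier argument that the two vertex projections onto $K_R$ and onto $L_R$ are both homotopy equivalences. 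That route is longer because it requires setting up the carrier machinery, so given the references already available in the paper I would present the nerve argument.
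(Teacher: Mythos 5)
Your argument is correct, but note that the paper does not actually prove this theorem: it states it as a classical result and cites Dowker (1952) for the homology version and Bj\"orner (1995) for the homotopy version. What you have written is, in substance, Bj\"orner's proof. The details check out: for $y \in \Ima R$ the subcomplex $cl(\sigma_y)$ consists exactly of the simplices with vertex set contained in $R^{-1}(y)$, so $\bigcap_i cl(\sigma_{y_i})$ is the full closed simplex on $\bigcap_i R^{-1}(y_i)$ (here you are implicitly using that any nonempty subset of some $R^{-1}(y_0)$ is automatically a simplex of $K_R$, which is immediate from Definition \ref{dfnDowHor}); hence the cover is good, the nerve lemma for finite covers by subcomplexes with contractible nonempty intersections applies, and the nerve is $L_R$ on the nose since its vertex set is $\Ima R$, which is precisely the vertex set of $L_R$. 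Your closing remark correctly identifies the two standard routes --- the nerve argument versus Dowker's original carrier/auxiliary-complex argument --- and the trade-off between them. The only thing to flag is presentational: if this proof were inserted into the paper it would be proving a cited theorem rather than filling a gap the author left, so it should be framed as a proof sketch for the reader's convenience rather than as new content.
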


		In 1952, Dowker \cite{arDowkerCmp} has shown that $ K_R $ and $ L_R$ have isomorphic homology groups. In 1995, Björner \cite{arBjorner} has shown that $ | K_R | $ and $ | L_R | $ are homotopically equivalent, which is the more commonly use in the literature. In recent years, Dowker complexes were regained in popularity in the community of topology data analysis. We can use them to do a filtration on weighted networks \cite{arDowThPersHomo}. In our cases, our filtration will be different and based on different powers of a self-relation.
	
	\section{Left and Right Morphism}
	
		Let start with the definition of the graph homomorphism and next we define left and right morphisms between relations.
		
		\begin{definition}
			Let $ R $ be a self-relation on $X$ and $ R' $ be a self-relation on $Y$. A map $f : X \to Y $ is a graph homomorphism if the following condition is satisfied :
			
			\begin{equation*}
				\text{For every }  x_1, x_2 \in X \text{ such as } x_1 R x_2 \implies f(x_1) R' f(x_2).
			\end{equation*}
			
			If $ f $ is bijective and its inverse is also a graph homomorphism, then $ f $ is a graph isomorphism.			
		\end{definition}		
		
		We obtain that graph homomorphism keeps some information of the Dowker complex coming from the first relation.		
		
		\begin{lemma}\label{lemGraphHomoInj}
			Let $ f : X \to Y $ be a graph homomorphism between $ R $ and $ R' $. If $ f $ is injective, then there exist a map $ p: K_R \hookrightarrow K_{R'} $.
		\end{lemma}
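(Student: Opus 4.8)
The plan is to take $p$ to be the simplicial map induced by $f$ on vertices and then check the two things that make it an inclusion: that it lands in $K_{R'}$, and that it is injective.

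First I would record that the vertex set of $K_R$ is exactly $\Dom R$, since by Definition \ref{dfnDowHor} a singleton $[x]$ lies in $K_R$ precisely when there is some $z \in X$ with $x R z$. The graph homomorphism condition then gives that $x R z$ implies $f(x) R' f(z)$, so $f$ carries $\Dom R$ into $\Dom R'$; in other words $f$ sends vertices of $K_R$ to vertices of $K_{R'}$. I define $p$ on a simplex by $p([x_1,\ldots,x_n]) := [f(x_1),\ldots,f(x_n)]$.

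Next I would verify that $p$ is well defined as a simplicial map. If $[x_1,\ldots,x_n] \in K_R$, then by Definition \ref{dfnDowHor} there is $z \in X$ with $x_i R z$ for all $i$; applying the graph homomorphism condition to each pair $(x_i,z)$ yields $f(x_i) R' f(z)$ for all $i$, so $f(z)$ witnesses that $[f(x_1),\ldots,f(x_n)] \in K_{R'}$. Because $f$ is injective and the $x_i$ are pairwise distinct, the $f(x_i)$ are pairwise distinct, so the image is genuinely an $(n-1)$-simplex and $p$ preserves dimension; since faces of $[x_1,\ldots,x_n]$ are visibly sent to faces of $[f(x_1),\ldots,f(x_n)]$, the assignment $p$ is a simplicial map. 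For injectivity, if $p(\sigma)=p(\tau)$ for simplices $\sigma,\tau$ of $K_R$, then $\{f(x): x\in\sigma\}=\{f(x):x\in\tau\}$, and injectivity of $f$ forces $\sigma=\tau$. Hence $p\colon K_R \hookrightarrow K_{R'}$ is an injective simplicial map, as claimed.

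There is essentially no obstacle here beyond carefully unwinding the Dowker-complex definition; the single point that must not be skipped is that the witness $z$ for $\sigma$ in $K_R$ produces, via $f$, the witness $f(z)$ for $p(\sigma)$ in $K_{R'}$, and this is exactly where the graph homomorphism hypothesis is used. Injectivity of $f$ is only needed so that $p$ is an embedding rather than merely a simplicial map (it guarantees dimension is preserved and that distinct simplices have distinct images).
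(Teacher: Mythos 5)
Your proof is correct and follows essentially the same route as the paper's: define $p$ on a simplex by applying $f$ vertexwise, use the graph homomorphism condition to transport the witness $z$ to the witness $f(z)$, and invoke injectivity of $f$ to see that dimensions are preserved and that $p$ is injective. The only difference is that you spell out a few routine details (vertices map to vertices, faces map to faces) that the paper leaves implicit.
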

		\begin{proof}
			Consider a $n$-simplex $ [ x_0, x_1, \ldots, x_n ] \in K_R $. Then, there exists $ \alpha \in X $ such as $ x_i R \alpha $ for all $ i = 0,1,2, \ldots n $. We have that $ f $ is a graph homomorphism. This implies that $ f(x_i) R' f(\alpha) $ for all $ i =0, 1, 2, \ldots, n $. Indeed, $f$ is injective implies that $ [f(x_0), f(x_1), \ldots, f(x_n)] $ is also a $n$-simplex in $ K_{R'} $. So we can construct a map $p : K_R \hookrightarrow K_{R'} $ by sending a simplex $ [x_0, x_1, \ldots, x_n] $ to $ [f(x_0), f(x_1), \ldots, f(x_n)] $. By the previous argument, $ p $ is well defined and injective.
		\end{proof}	 

		If we have a graph isomorphism between two relations, then the Dowker complexes remain unchanged. This holds because graph isomorphisms are relabelling on the vertices of a graph.

		\begin{proposition}
			Let $ R_1$ be a self-relation on $X$ and $ R_2 $ be a self-relation on $Y$. If there exists a graph isomorphism $f$ between $ R_1 $ and $ R_2 $, then they have the same Dowker complexes up to the label of vertices. 
 		\end{proposition}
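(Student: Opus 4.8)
The plan is to bootstrap from Lemma \ref{lemGraphHomoInj} applied in both directions. Since $f$ is a graph isomorphism, both $f$ and $f^{-1}$ are injective graph homomorphisms, so the lemma yields simplicial maps $p : K_{R_1} \hookrightarrow K_{R_2}$ sending $[x_0, \ldots, x_n]$ to $[f(x_0), \ldots, f(x_n)]$, and $q : K_{R_2} \hookrightarrow K_{R_1}$ sending $[y_0, \ldots, y_n]$ to $[f^{-1}(y_0), \ldots, f^{-1}(y_n)]$. First I would observe that $q \circ p = Id_{K_{R_1}}$ and $p \circ q = Id_{K_{R_2}}$ strictly, because $f^{-1} \circ f = Id_X$ and $f \circ f^{-1} = Id_Y$ already on the level of vertex sets. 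Hence $p$ is a bijection between the simplex sets of $K_{R_1}$ and $K_{R_2}$ that preserves the face relation in both directions, i.e.\ an isomorphism of abstract simplicial complexes; this is precisely the statement that $K_{R_1}$ and $K_{R_2}$ agree after relabelling each vertex $x$ by $f(x)$.

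It remains to handle the complexes $L_{R_1}$ and $L_{R_2}$. Comparing Definitions \ref{dfnDowHor} and \ref{dfnDowkerCmp} one sees that $L_R = K_{R^{-1}}$ for any relation $R$. Moreover $f$ is simultaneously a graph isomorphism between $R_1^{-1}$ and $R_2^{-1}$: for $x_1, x_2 \in X$ we have $x_1 R_1^{-1} x_2 \iff x_2 R_1 x_1 \iff f(x_2) R_2 f(x_1) \iff f(x_1) R_2^{-1} f(x_2)$, and running the same chain with $f^{-1}$ gives the reverse implication. Applying the previous paragraph to $R_1^{-1}$ and $R_2^{-1}$ then gives $L_{R_1} = K_{R_1^{-1}} \cong K_{R_2^{-1}} = L_{R_2}$, again via the relabelling $x \mapsto f(x)$.

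The argument is essentially bookkeeping, so I do not expect a genuine obstacle; the one point deserving a line of care is the phrase ``in both directions'' above. A simplicial injection need not induce an isomorphism onto the subcomplex spanned by its image, so one really does invoke the two-sided inverse $q$ rather than merely matching the number of simplices in each dimension. One could also note in passing that this proposition is the degenerate case of the conjugacy results of this section, obtained when the conjugating relations are the graphs of the mutually inverse bijections $f$ and $f^{-1}$.
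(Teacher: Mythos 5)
Your proof is correct and follows essentially the same route as the paper: apply Lemma \ref{lemGraphHomoInj} to both $f$ and $f^{-1}$ and conclude that the resulting injections identify the complexes up to relabelling. You are somewhat more careful than the paper's own two-line argument --- in particular the explicit observation that $q\circ p$ and $p\circ q$ are identities (so that $p$ is genuinely an isomorphism, not just one of a pair of injections), and the reduction $L_R = K_{R^{-1}}$ in place of the paper's ``by similar arguments'' --- but these are refinements of the same idea rather than a different proof.
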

		\begin{proof}
			Graph homomorphisms $ f $ and $ f^{-1} $ are injective. By the Lemma \ref{lemGraphHomoInj}, there exist two injective maps $ p: K_R \hookrightarrow K_{R'} $ and $ p' : K_{R'} \hookrightarrow K_R  $. So we have that $ K_R $ and $ K_{R'} $ are the same up to the label of vertices.
		
		By similar arguments, we can show it for $ L_R $ and $ L_{R'} $.
		\end{proof}
					
		Graph homomorphisms are nice, but they can drastically change the Dowker complexes. So, we defined a left morphism which it changes the source of an edge and a right morphism which it changes the target of an edge. In this way, only one of the Dowker complexes will change from the right morphism or the left morphism.
		
	\begin{definition}\label{dfnRigMor}
		A right morphism $ f:(X, Y, R) \to (X, Z, R') $ is a map $ f : Y \to Z $ such that for every $ x \in X $ and $y \in Y $ :
		\begin{equation*}
			x R y \implies x R' f(y).
		\end{equation*}
	\end{definition} 	
	
	We obtain this simple Lemma which is very useful for later proofs.	
	
	\begin{lemma}\label{lemRigMor}
		If there exists a right morphism $ f:(X, Y, R) \to (X, Z, R') $, then $ K_R \leq K_{R'} $. We obtain the equality if $ f $ is a bijective map.
	\end{lemma}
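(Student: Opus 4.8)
The statement follows almost directly from Definition~\ref{dfnDowHor}, so the plan is to unwind that definition on both the $R$-side and the $R'$-side and to observe that the defining implication of a right morphism is exactly what transports a witness from $Y$ to $Z$. Recall that $K_R \leq K_{R'}$ means that every simplex of $K_R$ is a simplex of $K_{R'}$, both complexes having their vertices among the elements of $X$; since $K_R$ is already closed under faces, it suffices to check this membership simplex by simplex.

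First I would take an arbitrary simplex $[x_0, x_1, \ldots, x_n] \in K_R$. By Definition~\ref{dfnDowHor} there is some $y \in Y$ with $x_i R y$ for all $i = 0, 1, \ldots, n$. Applying the right morphism $f$ to each of these relations gives $x_i R' f(y)$ for all $i$, and since $f(y) \in Z$, Definition~\ref{dfnDowHor} applied to $R'$ shows $[x_0, x_1, \ldots, x_n] \in K_{R'}$. As the simplex was arbitrary, $K_R \leq K_{R'}$. Note that nothing about injectivity or surjectivity of $f$ is needed here: a single target element $f(y)$ does all the work, even when several $y$'s collapse onto it.

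For the equality when $f$ is a bijection, the plan is to run the same argument in the other direction along $f^{-1} : Z \to Y$: given a simplex $[x_0, \ldots, x_m] \in K_{R'}$ witnessed by $z \in Z$, one wants $x_i R f^{-1}(z)$ for all $i$ in order to conclude $[x_0, \ldots, x_m] \in K_R$ and hence $K_{R'} \leq K_R$. This amounts precisely to the claim that $f^{-1}$ is again a right morphism, now from $(X, Z, R')$ to $(X, Y, R)$. Verifying that is the only step with any real content in the lemma, and it is where the bijectivity hypothesis is consumed — much as a graph isomorphism must have its inverse among the graph homomorphisms. Once $f^{-1}$ is known to be a right morphism, the inclusion just established, applied to $f^{-1}$, yields $K_{R'} \leq K_R$, and combined with $K_R \leq K_{R'}$ this gives $K_R = K_{R'}$. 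The main (and only) obstacle, then, is the inverse-morphism check for the equality direction; the inclusion itself is immediate from the definitions.
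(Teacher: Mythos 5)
Your proof of the inclusion $K_R \leq K_{R'}$ is complete and is essentially word-for-word the paper's argument: take the witness $y$ for a simplex of $K_R$ and transport it to the witness $f(y)$ for the same simplex in $K_{R'}$. You are also right that injectivity plays no role there.

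The equality claim is where the problem lies. You correctly isolate the crux --- that one needs $f^{-1}$ to be a right morphism $(X,Z,R') \to (X,Y,R)$, i.e.\ that $x R' z \implies x R f^{-1}(z)$ --- but you never discharge this obligation, and in fact it cannot be discharged from bijectivity alone. The defining condition of a right morphism is a one-way implication, and inverting the map does not invert the implication. Concretely, take $Y = Z$ and $f = id_Y$, which is bijective; then $f$ is a right morphism exactly when $R \subseteq R'$, and any strict containment with different Dowker complexes kills the claim. For instance $X = \{x_1, x_2\}$, $Y = Z = \{a\}$, $R = \{(x_1,a)\}$, $R' = \{(x_1,a),(x_2,a)\}$ gives $K_R = \{[x_1]\}$ while $[x_1,x_2] \in K_{R'}$. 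So the step you postponed as ``the only step with any real content'' is not merely unproved in your write-up: it fails. For what it is worth, the paper's own proof makes exactly the same leap, asserting $x_i R f^{-1}(z)$ with no justification, so the second sentence of the lemma as stated is too strong. It does hold in the one place it is used (the conjugacy corollary, where $R' = \varphi \circ R$ is precisely the image of $R$ under $id_X \times \varphi$, so the converse implication is available by construction), but that is an additional hypothesis on $R'$, not a consequence of $f$ being a bijection. You should either add that hypothesis or restrict the equality claim accordingly.
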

	\begin{proof}
		Let $ f :(X, Y, R) \to (X, Z, R') $ be a right morphism and $ [x_1, x_2, \ldots, x_n] \in K_R $. This implies there exists a $ y \in Y$ such as $ x_i R y $ for all $ i = 1, 2, \ldots, n $. We obtain that $ x_i R' f(y) $ for all $ i  $. Finally, we have $ [x_1, x_2, \ldots, x_n] \in K_{R'} $.
		
		Now, we suppose that $f$ is bijective. Let $ [ x_1, x_2, \ldots, x_n ] \in K_{R'} $. Then, there exists a $ z \in Z $ such that $ x_i R' z $ for all $ i = 1, 2, \ldots, n $. We have $ f^{-1}(z) \in Y  $ and $ f^{-1} $ is well defined because $ f $ is bijective. Then, $ x_i R f^{-1}(z)$ for all $ i = 1, 2, \ldots, n $. We obtain that $ [ x_1, x_2, \ldots, x_n ] \in K_R $. 
	\end{proof}
			
		The idea of right morphism comes from the article \cite{arGeoRel}. The author only considered the right morphism. But, in our case, we are also interesting of modifying the first set in the cartesian product of a relation.			
		
	\begin{definition}\label{dfnLeftMor}
		A left morphism $ g : (X, Z, R) \to (Y, Z, R') $ is a map $ g : X \to Y $ such that for every $ x \in X $ and $ z \in Z $ :
				
		\begin{equation*}
			x R z \implies g(x) R' z.
		\end{equation*}
	\end{definition}			
				
		We have an analogous Lemma for left morphism as the Lemma \ref{lemRigMor} for right morphism.	
		
	\begin{lemma} \label{lemLeftMor}
		If there exists a left morphism $ g : (X, Z, R) \to (Y, Z, R) $, then $ L_R \leq L_{R'} $. We obtain the equality if $ g $ is a bijective map.
	\end{lemma}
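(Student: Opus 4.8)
The plan is to transpose the proof of Lemma~\ref{lemRigMor} coordinate by coordinate. There the map acts on the second factor of the Cartesian product and controls $K_R$, the complex whose simplices are the sets of points of $X$ sharing a common target in $Y$; here the map $g$ acts on the first factor, so it should control $L_R$, the complex whose simplices are the sets of points of $Z$ sharing a common source in $X$ (Definition~\ref{dfnDowkerCmp}). The defining property of a left morphism is precisely the statement that $g$ carries a common source forward to a common source.

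First I would establish the inclusion. Let $[z_1, z_2, \ldots, z_m] \in L_R$. By Definition~\ref{dfnDowkerCmp} there is $x \in X$ with $x R z_i$ for all $i = 1, 2, \ldots, m$. Applying the implication in Definition~\ref{dfnLeftMor} to each pair $(x, z_i)$ yields $g(x) R' z_i$ for all $i$, and since $g(x) \in Y$ this shows $[z_1, z_2, \ldots, z_m] \in L_{R'}$ by the same definition. As the simplex was arbitrary, $L_R \leq L_{R'}$.

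Then I would treat the bijective case. Assume $g$ is a bijection and let $[z_1, z_2, \ldots, z_m] \in L_{R'}$, witnessed by some $y \in Y$ with $y R' z_i$ for all $i$. Set $x := g^{-1}(y) \in X$, which makes sense since $g$ is bijective; one checks, exactly as in the corresponding step of Lemma~\ref{lemRigMor}, that then $x R z_i$ for all $i$ (equivalently, that $g^{-1}$ is again a left morphism, from $(Y, Z, R')$ to $(X, Z, R)$). Hence $[z_1, z_2, \ldots, z_m] \in L_R$, and combined with the previous paragraph this gives $L_R = L_{R'}$. I do not expect a genuine obstacle here: the whole argument is a routine mirror image of the right-morphism lemma, the only mildly delicate point being the reverse implication in the bijective case, which is handled just as it was there.
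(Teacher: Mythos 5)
Your first paragraph (the inclusion $L_R \leq L_{R'}$) is correct and is exactly the argument the paper intends: the paper omits the proof of Lemma~\ref{lemLeftMor} entirely, declaring it analogous to Lemma~\ref{lemRigMor}, and your transposition of that proof to the first coordinate is the right mirror image.

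The equality half is where there is a genuine gap. You set $x := g^{-1}(y)$ and assert that ``one checks, exactly as in the corresponding step of Lemma~\ref{lemRigMor}, that then $x R z_i$ for all $i$,'' i.e.\ that $g^{-1}$ is again a left morphism. But that is the \emph{converse} of the left-morphism implication, and it does not follow from $g$ being a bijective left morphism. Concretely, take $X = Y = \lbrace a, b \rbrace$, $Z = \lbrace z_1, z_2 \rbrace$, $g = \mathrm{id}$, $R = \lbrace (a,z_1), (b,z_2) \rbrace$ and $R' = R \cup \lbrace (a,z_2) \rbrace$: then $g$ is a bijective left morphism (the condition just says $R \subseteq R'$), yet $[z_1, z_2] \in L_{R'} \setminus L_R$, so the claimed equality fails. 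To be fair, the paper's own proof of Lemma~\ref{lemRigMor} makes exactly the same unjustified leap (``Then, $x_i R f^{-1}(z)$''), so you are faithfully reproducing the template; but the step cannot be ``checked'' from bijectivity alone. In the places the paper actually invokes the equality (e.g.\ the conjugacy corollary, where the target relation is $R' \circ \varphi$ or $\varphi \circ R$ for a bijection $\varphi$), the second relation is precisely the transport of the first along the bijection, and there the converse implication does hold. For your proof to close, you would need to either add the hypothesis that $g^{-1}$ is itself a left morphism from $(Y, Z, R')$ to $(X, Z, R)$, or prove the equality only for that restricted situation.
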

	
	With the definition of right and left morphism, we can easily show that if two relations are conjugate, then there Dowker complexes are homotopically equivalent. We remind the definition of conjugacy between relations before showing the proof.	
	
	\begin{definition}
		Let $ R_1 $ be a self-relation on $X$ and $ R_2$ be a self-relation on $Y$. We say that $ R_1 $ and $ R_2 $ are conjugate if there exists a bijective map $ \varphi : X \to Y $ such as $ \varphi \circ R_1 = R_2 \circ \varphi $.
	\end{definition}	
	
	\begin{corollary}
		Let $ R  $ be a self-relation on $X$ and $ R'$ be a self-relation on $Y$ which are conjugate by a bijective map $ \varphi : X \to Y $. Then, $ |K_R|, |L_R|, |L_{R'}| $ and $ |K_{R'}| $ are homotopy equivalent.
	\end{corollary}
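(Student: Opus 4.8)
The plan is to squeeze a single intermediate relation between $R$ and $R'$ and then apply Dowker's Theorem three times. First I would rewrite the conjugacy hypothesis in a form adapted to Definitions~\ref{dfnRigMor} and~\ref{dfnLeftMor}: from $\varphi \circ R = R' \circ \varphi$ and the bijectivity of $\varphi$ one gets, pointwise, $\varphi(R(x)) = R'(\varphi(x))$ for every $x \in X$, equivalently $R' = \varphi \circ R \circ \varphi^{-1}$. I then set $T := \varphi \circ R \subseteq X \times Y$, i.e. $T = \{(x, \varphi(x')) : x R x'\}$, which will be the bridge between the two complexes.

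The second step is two short verifications. The map $\varphi : (X, X, R) \to (X, Y, T)$ (playing the role of $f : Y \to Z$ with the set $X$ kept fixed in the first coordinate) is a right morphism in the sense of Definition~\ref{dfnRigMor}, since $x R x'$ implies $x\,T\,\varphi(x')$ by construction of $T$; as $\varphi$ is bijective, Lemma~\ref{lemRigMor} gives $K_R = K_T$. Dually, $\varphi : (X, Y, T) \to (Y, Y, R')$ (now changing the first coordinate, with $Y$ fixed in the second) is a left morphism in the sense of Definition~\ref{dfnLeftMor}: if $x\,T\,y$ then $y = \varphi(x')$ for some $x R x'$, hence $(\varphi(x), y) = (\varphi(x), \varphi(x')) \in \varphi \circ R \circ \varphi^{-1} = R'$; since $\varphi$ is bijective, Lemma~\ref{lemLeftMor} gives $L_T = L_{R'}$.

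The last step just chains homotopy equivalences. Applying Theorem~\ref{thmDowker} to each of the three relations $R$, $T$, $R'$ gives $|K_R| \simeq |L_R|$, $|K_T| \simeq |L_T|$ and $|K_{R'}| \simeq |L_{R'}|$. Combining with the two equalities of complexes from the previous paragraph, $|K_R| \simeq |L_R|$ and $|K_R| = |K_T| \simeq |L_T| = |L_{R'}| \simeq |K_{R'}|$, so by transitivity all four polyhedra are homotopy equivalent, which is the claim.

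There is no genuinely hard step here — the corollary is designed to fall out of the two lemmas and Dowker's Theorem. The only mild subtlety I would be careful about is the translation of the composition-form conjugacy $\varphi \circ R = R' \circ \varphi$ into the pushforward description $T = \varphi\circ R$ and $R' = \varphi\circ R\circ\varphi^{-1}$, and matching the coordinate conventions of Definitions~\ref{dfnRigMor} and~\ref{dfnLeftMor} (right morphisms modify the target coordinate, left morphisms the source coordinate), so that the bijective maps are applied to the correct factor of the cartesian product. As an aside, one could shortcut the argument by noting that $\varphi(R(x)) = R'(\varphi(x))$ makes $\varphi$ a graph isomorphism between $G_R$ and $G_{R'}$ and invoking the earlier proposition on graph isomorphisms to get $K_R \cong K_{R'}$ and $L_R \cong L_{R'}$ up to relabelling, then Dowker; but the left/right-morphism route above uses only results already set up for the sequel.
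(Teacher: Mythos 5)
Your proof is correct and follows essentially the same route as the paper: the intermediate relation $T = \varphi \circ R$ ($= R' \circ \varphi$ by conjugacy) is exactly the bridge the paper uses, with Lemma~\ref{lemRigMor} giving $K_R = K_{\varphi\circ R}$, Lemma~\ref{lemLeftMor} giving $L_{R'} = L_{R'\circ\varphi}$, and Dowker's Theorem applied to that single relation closing the chain.
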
		
	\begin{proof}
		The map $ \varphi $ is bijective. It implies that $ K_R = K_{\varphi \circ R} $ by Lemma \ref{lemRigMor} and $ L_{R'} = L_{R' \circ \varphi} $ by Lemma \ref{lemLeftMor}.
		
		By Dowker's Theorem, we obtain that $ |K_R| $ is homotopic equivalent to $ |L_{R'}| $, because $ K_{\varphi \circ R} = K_{R' \circ \varphi} $.	
	\end{proof} 	
	
	In \cite{arCoshefDow}, the author decides to combine the right and left morphism together. Let $ R \subset X \times Y $ and $ R' \subset X' \times Y' $ be relations and $ f : X \to X' $ and $ g : Y \to Y' $ be two maps. A pair $ (f, g) $ is a morphism between relation $ R_1 $ and $ R_2 $ if for all $ x \in X, y \in Y  $ such that $ x R_1 y $ it implies that $ f(x) R_2 g(y) $. In \cite{arCoshefDow}, it is shown that the Dowker complex and (co)sheaf representation have nice functoriality properties. In our case, it won't be useful because we only need right or left morphism. But we can see them as a pair $ (id_X, f) $ where $ f $ is a right morphism and $ id_X $ is the identity function on $X$. 
	
	\section{Multi-right morphism and multi-left morphism}	
	
	We want to work with multivalued maps. We generalize left and right morphism to multi-left and multi-right morphism.
	
	\begin{definition}
		A multi-right morphism $ F : (X, Y, R) \multimap (X, Z,R')$ is a multivalued map $ F : Y \multimap Z $ such as for all $ x \in X $, $ y \in Y $ :
		\begin{equation*}
			x R y \implies x R' a \text{ for all } a \in F(y).
		\end{equation*}
	\end{definition}
	
		We also obtain the same Lemma as before.
		
	\begin{lemma}
		Let $ R \subset X \times Y $ and $ R' \subset X \times Z $ be relations. If there exist a multi-right morphism $ F : (X, Y, R) \multimap (X, Z, R') $, then $ K_R \leq K_{R'} $. We obtain the equality if $ F $ is a bijective multivalued map.
	\end{lemma}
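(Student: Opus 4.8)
The plan is to adapt the proof of Lemma~\ref{lemRigMor} to the multivalued setting; the only genuinely new ingredient is that a multivalued map, by definition, has full domain and hence nonempty values, which is exactly what is needed to imitate the single-valued argument. For the inclusion $K_R \leq K_{R'}$, I would take an arbitrary simplex $[x_1, x_2, \ldots, x_n] \in K_R$. By Definition~\ref{dfnDowHor} there is some $y \in Y$ with $x_i R y$ for every $i$. Since $F : Y \multimap Z$ is a multivalued map we have $\Dom F = Y$, so $F(y) \neq \emptyset$ and we may choose an element $a \in F(y)$. Applying the defining implication of a multi-right morphism to each pair $(x_i, y)$ gives $x_i R' a$ for all $i$, whence $[x_1, x_2, \ldots, x_n] \in K_{R'}$ by Definition~\ref{dfnDowHor}. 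Since $K_R$ and $K_{R'}$ share the vertex set $X$ and every simplex of the former is a simplex of the latter, this establishes $K_R \leq K_{R'}$.

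For the equality statement under the bijectivity hypothesis, the strategy is to run the same argument in reverse through $F^{-1} : Z \multimap Y$. Given a simplex $[x_1, x_2, \ldots, x_n] \in K_{R'}$, there is $z \in Z$ with $x_i R' z$ for all $i$; bijectivity of $F$ guarantees $F^{-1}(z) \neq \emptyset$, so we pick $b \in F^{-1}(z)$, and the goal is to conclude $x_i R b$ for all $i$, which yields $[x_1, x_2, \ldots, x_n] \in K_R$ and hence $K_{R'} \leq K_R$.

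The main obstacle, and the step that needs care, is justifying that last implication: unlike in the single-valued proof of Lemma~\ref{lemRigMor}, the forward defining property of a multi-right morphism does not by itself give $x R' z \implies x R b$ for $b \in F^{-1}(z)$. So I would make precise what ``bijective multivalued map'' means here --- the cleanest reading being that $F$ is a single-valued bijection, in which case $F^{-1}$ is again a (multi-)right morphism and the statement collapses to the bijective case of Lemma~\ref{lemRigMor} --- and then the reverse inclusion follows exactly as the forward one did. With that hypothesis pinned down, the two inclusions together give $K_R = K_{R'}$.
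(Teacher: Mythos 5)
Your proposal is correct and follows the paper's route exactly: the paper's own proof of this lemma is the single line ``the proof is the same as Lemma~\ref{lemRigMor},'' and your argument is precisely that adaptation, with the one new ingredient (using $\Dom F = Y$ to choose a witness $a \in F(y)$) correctly identified. Your caution about the equality clause is warranted and goes beyond what the paper records: under the paper's definitions of injective and surjective relations, a genuinely multivalued bijection need not give $K_{R'} \leq K_R$ --- for instance $Y=\{y\}$, $Z=\{a,b\}$, $F(y)=\{a,b\}$, $R=\{(x_1,y)\}$, $R'=\{(x_1,a),(x_1,b),(x_2,b)\}$ satisfies the multi-right morphism condition yet $[x_1,x_2]\in K_{R'}\setminus K_R$ --- so pinning down ``bijective multivalued map'' as a single-valued bijection (equivalently, requiring $F^{-1}$ to be a multi-right morphism from $R'$ to $R$) is exactly the right way to make the second half of the statement go through.
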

	\begin{proof}
		The proof is the same as Lemma \ref{lemRigMor}.
	\end{proof}	
	
	\begin{definition}
		A multi-left morphism $G : (X, Z, R) \multimap (Y, Z, R') $ is a multivalued map $ G : X \to Y $ such for all $ x \in X, z \in Z $ :
				
		\begin{equation*}
			x R z \implies a R' z \text{ for all } a \in G(x).
		\end{equation*}
	\end{definition}	
	
	\begin{lemma}
		Let $ R \subset X \times Z $ and $ R' \subset Y \times Z $ be relations. If there exist a multi-left morphism $ G : (X, Z, R) \multimap (Y, Z, R') $, then $ L_R \leq L_{R'} $.  We obtain the equality if $ G $ is a bijective multivalued map.
	\end{lemma}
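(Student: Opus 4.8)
The plan is to mirror the proof of Lemma~\ref{lemRigMor}, and its left-handed counterpart Lemma~\ref{lemLeftMor}, since the present statement is simply the ``multivalued, left'' version of the same phenomenon: a morphism that only alters the first coordinate of a relation can add simplices to, but never delete simplices from, the Dowker complex $L$ built out of the second coordinate.

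First I would take an arbitrary simplex $[z_1, z_2, \ldots, z_m] \in L_R$. By Definition~\ref{dfnDowkerCmp} this means there is an $x \in X$ with $x R z_i$ for every $i = 1, 2, \ldots, m$. Because $G$ is a multivalued map, $G(x)$ is non-empty, so I can fix some $a \in G(x)$. Applying the defining property of a multi-left morphism to each pair $(x, z_i)$ gives $a R' z_i$ for all $i$, and hence $[z_1, z_2, \ldots, z_m] \in L_{R'}$. As the simplex was arbitrary, this yields $L_R \leq L_{R'}$.

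For the equality clause I would assume $G$ is a bijective multivalued map, so that $G^{-1}$ is again a multivalued map and in fact a multi-left morphism $G^{-1} : (Y, Z, R') \multimap (X, Z, R)$; applying the inclusion just proved to $G^{-1}$ gives $L_{R'} \leq L_R$, and together with the first part we conclude $L_R = L_{R'}$. The one point that requires care here is unwinding what ``bijective multivalued map'' means and checking that it forces the reverse implication $y R' z \Rightarrow x R z$ whenever $x \in G^{-1}(y)$, which is exactly the condition needed for $G^{-1}$ to be a multi-left morphism.

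I expect the only genuinely delicate step to be that verification about $G^{-1}$; the forward inclusion is a two-line transcription of the argument for Lemma~\ref{lemRigMor} with the two coordinates interchanged and with a fixed image $f(y)$ replaced by an arbitrarily chosen element of $G(x)$. I would therefore keep the forward direction terse and devote the remaining space to the bijective case.
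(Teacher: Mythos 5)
Your proof is correct and is exactly the argument the paper intends: the paper leaves this lemma (and its single-valued counterpart, Lemma~\ref{lemLeftMor}) unproved, deferring to the proof of Lemma~\ref{lemRigMor}, and your transcription — witness $x$, pick any $a \in G(x)$ using $\Dom G = X$, invert $G$ for the bijective case — is precisely that adaptation. Your remark that the non-emptiness of $G(x)$ and the verification that $G^{-1}$ is again a multi-left morphism are the only points needing care is accurate; the paper glosses over both.
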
	
	
	We denote a multi-right morphism by mr-morphism and a multi-left morphism by ml-morphism.	
	
	\begin{remarks}
		We remind that if a relation $ S \subset X \times Y $ satisfies $ \Dom S = X $, then $ S $ is a well-defined multivalued map. Moreover, for any relation $ R \subset Z \times X $, we have that $ S : (Z, X, R) \multimap (Z, Y, S \circ R) $ is a well-defined mr-morphism. It is also true for ml-morphism.
	\end{remarks}		

	The next corollary will be useful to define our filtrations.

	\begin{corollary}\label{lblCorRelPower}
			Let $ R $ be a self-relation on $X$. If $ \Dom R = X $, then $ K_{R^n} \leq K_{R^{n+1}} $. If $ \Ima R = X $, then $ L_{R^n} \leq L_{R^{n+1}} $.
	\end{corollary}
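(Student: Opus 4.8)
The plan is to produce, for each of the two claims, an explicit multivalued morphism between the two consecutive powers of $R$ and then invoke the inclusion lemmas for mr- and ml-morphisms. For the first statement, note that $\Dom R = X$ is exactly the condition that $R$ is a well-defined multivalued map $R : X \multimap X$. Since $R^{n+1} = R \circ R^n$, the remark stated just above the corollary (with the relation $R^n \subset X \times X$ as base relation and $R$ itself as the composed multivalued map) provides a well-defined mr-morphism $R : (X, X, R^n) \multimap (X, X, R^{n+1})$. I would include the one-line check: if $x R^n y$ and $a \in R(y)$, then $y R a$, so $x R^n y R a$ witnesses $x R^{n+1} a$, which is the defining condition of an mr-morphism. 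Applying the lemma on mr-morphisms gives $K_{R^n} \leq K_{R^{n+1}}$.

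For the second statement I would dualize by passing to $R^{-1}$. The hypothesis $\Ima R = X$ says precisely that $\Dom R^{-1} = X$, so $R^{-1} : X \multimap X$ is a well-defined multivalued map. Using $R^{n+1} = R^n \circ R$ (legitimate by associativity of relation composition), the map $R^{-1}$ is an ml-morphism $R^{-1} : (X, X, R^n) \multimap (X, X, R^{n+1})$: if $x R^n z$ and $a \in R^{-1}(x)$, that is $a R x$, then $a R x R^n z$ witnesses $a R^{n+1} z$, which is the defining condition of an ml-morphism. The lemma on ml-morphisms then yields $L_{R^n} \leq L_{R^{n+1}}$.

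The argument is short, so there is no deep obstacle; the only thing requiring care is the bookkeeping. One must line up the composition convention — writing $R^{n+1}$ as $R \circ R^n$ for the $K$-side and as $R^n \circ R$ for the $L$-side — with the side on which the morphism acts, and keep track of which of the two domain/image hypotheses makes the relevant map ($R$ on the right, $R^{-1}$ on the left) a genuine multivalued map. Once these are matched, the statement follows immediately from the verifications above together with the previously established mr- and ml-morphism lemmas.
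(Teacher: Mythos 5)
Your proposal is correct and follows essentially the same route as the paper: realize $R$ (resp.\ $R^{-1}$) as a multivalued map using $\Dom R = X$ (resp.\ $\Ima R = X$), view it as an mr-morphism (resp.\ ml-morphism) from $(X,X,R^n)$ to $(X,X,R^{n+1})$, and invoke the corresponding inclusion lemma. Your explicit one-line verifications of the morphism conditions are a welcome addition the paper leaves implicit.
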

	\begin{proof}	
	$ \Dom R = X $ implies that $R$ is a multivalued map. Moreover, $ R : (X, X, R^n) \multimap (X, X, R^{n+1}) $ is well-defined mr-morphism. By Lemma \ref{lemRigMor}, we have that $ K_{R^n} \leq K_{R^{n+1}} $.
	
	In the same way, $ \Ima R = X $ implies that $R^{-1} $ is a multivalued map. We have that $ R^{-1} : (X, X, R^n) \multimap (X, X, R^{n+1}) $ is well-defined ml-morphism. By Lemma \ref{lemLeftMor}, we have that $ L_{R^n} \leq L_{R^{n+1}} $.
	\end{proof}
	
	\begin{remark}	
		The hypothesis $ \Dom R = X $ in the previous corollary is important. Let us show that by an example. If $ R$ is a self-relation on $X$, define by this matrix such as :
			\begin{equation}
				\begin{bmatrix}
				0 & 1 & 1 \\
				0 & 0 & 1 \\
				0 & 0 & 0
				\end{bmatrix}
			\end{equation}
		Then, we obtain $ K_{R} = \{ [ x_2, x_3 ], [x_2], [x_3] \} $, $ K_{R^2} = \{ [ x_3 ] \} $ and $ K_{R^n} = \emptyset $ for $ n > 2 $.
		
		We don't have $ K_{R^n} \subset K_{R^{n+1}} $ for all $ n \in \mathbb{N}_{>0} $. If the matrix is nilpotent, then $ K_{R^n} $ is an empty set for an $ n \in \mathbb{N}_{>0} $.	
	\end{remark}

	Using Corollary \ref{lblCorRelPower}, we can show that the Dowker complexes of a relation stabilize at some power $n$.	
	
	\begin{corollary} \label{corEvePerRel}
			Let $ R $ be a finite self-relation on $X$ with an eventual period $(j, p)$. If $ \Dom R = X $, then, we have $ K_{R^j} = K_{R^{j+i}} $ for $ i \in \mathbb{N} $. If $\Ima R = X $, then $ L_{R^j} = L_{R^{j+i}} $ for $ i \in \mathbb{N} $. 
		\end{corollary}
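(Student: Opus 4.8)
The plan is to combine Corollary~\ref{lblCorRelPower} with the definition of eventual period (Definition~\ref{dfnEventPeriod}). Since $\Dom R = X$, Corollary~\ref{lblCorRelPower} applies at every power, so we obtain an ascending chain of sub-complexes
\begin{equation*}
K_{R^{0}} \leq K_{R^{1}} \leq K_{R^{2}} \leq \cdots \leq K_{R^{j}} \leq K_{R^{j+1}} \leq \cdots .
\end{equation*}
The eventual period gives $R^{j} = R^{j+p}$, hence $K_{R^{j}} = K_{R^{j+p}}$. An ascending chain that takes the same value at the indices $j$ and $j+p$ must be constant between them, so $K_{R^{j}} = K_{R^{j+1}} = \cdots = K_{R^{j+p}}$.

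Next I would reduce an arbitrary power to this window. Given $i \in \mathbb{N}$, write $i = qp + r$ with $0 \leq r < p$. Composing the equality $R^{j} = R^{j+p}$ with $R^{r}$ gives $R^{j+r} = R^{j+p+r}$, and iterating this $q$ times yields $R^{j+r} = R^{j+qp+r} = R^{j+i}$. Hence $K_{R^{j+i}} = K_{R^{j+r}}$, and since $j \leq j+r \leq j+p$ the first paragraph gives $K_{R^{j+r}} = K_{R^{j}}$. This proves $K_{R^{j}} = K_{R^{j+i}}$ for every $i \in \mathbb{N}$.

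Finally, the statement for $L$ follows by the symmetric argument: $\Ima R = X$ lets us invoke the second half of Corollary~\ref{lblCorRelPower} to get the ascending chain $L_{R^{0}} \leq L_{R^{1}} \leq \cdots$, and the same eventual-period bookkeeping (again using $R^{j} = R^{j+p}$, which is a statement about the relation itself and so is shared by both Dowker complexes) yields $L_{R^{j}} = L_{R^{j+i}}$ for all $i \in \mathbb{N}$.

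I do not expect a genuine obstacle here; the only point needing a line of care is the claim $R^{j+i} = R^{j+r}$, which is not literally Definition~\ref{dfnEventPeriod} but follows from one composition step plus induction on $q$. One should also confirm that Corollary~\ref{lblCorRelPower} is applied only where valid — but as stated it holds for every $n$ once $\Dom R = X$ (resp.\ $\Ima R = X$), so no restriction on the power is needed.
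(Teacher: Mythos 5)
Your proposal is correct and follows essentially the same route as the paper: apply Corollary~\ref{lblCorRelPower} to get the ascending chain $K_{R^j} \leq \cdots \leq K_{R^{j+p}}$, use $R^j = R^{j+p}$ to collapse that window to equalities, and then handle general $i$ by periodicity. The paper leaves the reduction of arbitrary $i$ to the window implicit, whereas you spell out the division $i = qp + r$; that is just a more explicit write-up of the same argument.
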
		
		\begin{proof}
			By Corollary \ref{lblCorRelPower}, we have the sequence :
			\begin{equation}\label{eqComp}
				K_{R^j} \leq K_{R^{j+1}} \leq \ldots \leq K_{R^{j + p - 1}} \leq K_{R^{j + p}}.
			\end{equation}
			
			But $p $ is the period of $R$, hence $ R^j = R^{j + p} $ implies  that $ K_{R^j} = K_{R^{j+p}} $. By (\ref{eqComp}), we obtain $ K_{R^j} = K_{R^{j+i}} $  for $ i \in \mathbb{N} $. A similar proof can be done for  $L_{R^j} = L_{R^{j+i}} $ for $ i \in \mathbb{N} $.
		\end{proof}		
		
		We remind the definition of shift equivalence between two relations and we show the assumption that relations are shift equivalences implies that theirs Dowker complexes are homotopically equivalent at some power for each relation.
		
		\begin{definition}
			Let $ R_1 $ be finite self-relation on $X$ and $ R_2 $ be finite self-relation on $Y$. $ R_1 $ and $ R_2 $ are shift equivalent with a lag $ l $, if there exists two relations $ S \subset X \times Y $ and $ T \subset Y \times X $ such as :
			\begin{gather*}
				R_1 \circ T = T \circ R_2 \quad S \circ R_1 = R_2 \circ S \\
				T \circ S = R_1^l \quad  S \circ T = R_2^l	
			\end{gather*}	
			
			We say it is a strong shift equivalence if $ l = 1$. 		
		\end{definition}		
		
		\begin{corollary}\label{corShiEqui}
			Let $ R_1 $ be finite self-relation on $X$ with $ \Dom R_1 = X = \Ima R_1 $ and $ R_2 $ be finite self-relation on $Y$ with $ \Dom R_2 = Y = \Ima R_2 $. Let $(j_p, p)$ be an eventual period of $R_1$ and $(j_q, q) $ be an eventual period of $ R_2 $. Without loss of generality, we suppose that $ j_p \geq j_q $. If $ R_1 $ and $ R_2 $ are shift equivalent with lag  $ l $, then  $| K_{R_1^{j_p}}| $, $ | K_{R_2^{j_q}} |$, $| L_{R_1^{j_p}}|$ and $ | L_{R_2^{j_q}} |$ are homotopy equivalent. 
		\end{corollary}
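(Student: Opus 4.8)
The plan is to deduce everything, via Dowker's Theorem (Theorem~\ref{thmDowker}), from a single homotopy equivalence: I will produce one relation $P$ whose one Dowker complex coincides with $K_{R_1^{j_p}}$ while its other Dowker complex coincides with $L_{R_2^{j_q}}$. Dowker's Theorem applied to $P$ then bridges the two relations, and Dowker's Theorem applied to $R_1^{j_p}$ and to $R_2^{j_q}$ closes the square, giving the homotopy equivalence of all four polyhedra.

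First I would do the bookkeeping on $S$ and $T$. From $\Dom R_i$ and $\Ima R_i$ being the whole underlying set one gets, by a one-line induction, that $\Dom R_i^{\,l}$ and $\Ima R_i^{\,l}$ are the whole set for every $l\ge 1$; combining this with $T\circ S = R_1^l$, $S\circ T = R_2^l$ and the trivial inclusions $\Dom(B\circ A)\subseteq\Dom A$ and $\Ima(B\circ A)\subseteq\Ima B$ forces $\Dom S = X$, $\Ima S = Y$, $\Dom T = Y$, $\Ima T = X$. I would also record that iterating $S\circ R_1 = R_2\circ S$ gives $S\circ R_1^{\,n} = R_2^{\,n}\circ S$ for every $n\ge 0$.

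Now fix any $n\ge j_p$ (this is where the hypothesis $j_p\ge j_q$ enters, since then $n\ge j_q$ too) and set $P := S\circ R_1^{\,n} = R_2^{\,n}\circ S \subseteq X\times Y$. Because $\Dom S = X$, the multivalued map $S$ is a multi-right morphism $(X,X,R_1^{\,n})\multimap(X,Y,S\circ R_1^{\,n})$; because $\Dom T = Y$, the multivalued map $T$ is a multi-right morphism $(X,Y,P)\multimap(X,X,T\circ P)$, and $T\circ P = (T\circ S)\circ R_1^{\,n} = R_1^{\,n+l}$. By the lemma for multi-right morphisms (the analogue of Lemma~\ref{lemRigMor}) this yields $K_{R_1^{\,n}}\le K_P\le K_{R_1^{\,n+l}}$; since $n\ge j_p$ and $\Dom R_1 = X$, Corollary~\ref{corEvePerRel} gives $K_{R_1^{\,n}} = K_{R_1^{\,n+l}} = K_{R_1^{\,j_p}}$, so the sandwich collapses to $K_P = K_{R_1^{\,j_p}}$. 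Dually, $\Ima S = Y$ makes $S^{-1}$ a multi-left morphism $(Y,Y,R_2^{\,n})\multimap(X,Y,R_2^{\,n}\circ S)$, and $\Ima T = X$ makes $T^{-1}$ a multi-left morphism $(X,Y,P)\multimap(Y,Y,P\circ T)$ with $P\circ T = R_2^{\,n}\circ(S\circ T) = R_2^{\,n+l}$. By the lemma for multi-left morphisms (the analogue of Lemma~\ref{lemLeftMor}) this yields $L_{R_2^{\,n}}\le L_P\le L_{R_2^{\,n+l}}$, and since $n\ge j_q$ and $\Ima R_2 = Y$, Corollary~\ref{corEvePerRel} collapses it to $L_P = L_{R_2^{\,j_q}}$. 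Finally, Dowker's Theorem applied to $P$ gives $|K_{R_1^{\,j_p}}| = |K_P|\simeq|L_P| = |L_{R_2^{\,j_q}}|$, and applied to $R_1^{\,j_p}$ and $R_2^{\,j_q}$ it gives $|K_{R_1^{\,j_p}}|\simeq|L_{R_1^{\,j_p}}|$ and $|K_{R_2^{\,j_q}}|\simeq|L_{R_2^{\,j_q}}|$; chaining these identifications proves the statement.

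The step I expect to be the real content — everything else being bookkeeping — is the choice of $P$: it must be a relation that simultaneously sees the $R_1$-dynamics through one of its Dowker complexes and the $R_2$-dynamics through the other, so that a single invocation of Dowker's Theorem connects the two sides. The subtlety in carrying it out is keeping the fullness hypotheses aligned with the direction of each morphism — a multi-right morphism requires the relation being mapped to have full domain, a multi-left morphism requires it to have full image — so that each sandwich inclusion is genuinely available and each appeal to Corollary~\ref{corEvePerRel} is licensed. Once the identities $T\circ S = R_1^l$, $S\circ T = R_2^l$ and $S\circ R_1^{\,n} = R_2^{\,n}\circ S$ are in place, the two sandwiches are routine.
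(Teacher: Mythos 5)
Your proposal is correct and follows essentially the same route as the paper: the bridging relation $P=S\circ R_1^{\,n}=R_2^{\,n}\circ S$ is exactly the paper's $S\circ R_1^{\,j_p}$, and both arguments sandwich $K_P$ between $K_{R_1^{\,n}}$ and $K_{R_1^{\,n+l}}$ (and dually $L_P$ between powers of $R_2$) using $S$ and $T$ as multi-right/multi-left morphisms before invoking stabilization and Dowker's Theorem. If anything, your version is slightly cleaner in tracking which of $S$, $S^{-1}$, $T$, $T^{-1}$ plays each role and in justifying the use of $j_q$ via $n\ge j_p\ge j_q$.
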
	
		\begin{proof}	
 	If $ R_1 \circ T = T \circ R_2 $ then $ R_1^n \circ T = T \circ R_2^n $ is also true for $ n \in \mathbb{N} $. Moreover, we have $ \Dom S = X $ and $ \Dom T = Y $, because $ \Dom R_1 = X $, $ \Dom R_2 = Y $, $T \circ S = R_1^l$ and $  S \circ T = R_2^l $. So we have that $ S $ and $ R $ are well defined multivalued maps.
		
			We want to show that $ K_{R_1^{j_p}} = K_{S \circ R_1^{j_p}} $ and $ L_{R_2^{j_q}} = L_{R_2^{j_q} \circ S} $. We are going to use $ T $ and $ S $ as mr-morphism with $ R_1$ and ml-morphism with $R_2 $.

			We have that $ S : (X, X, R_1^{j_p}) \multimap (X, Y, S \circ R_1^{j_p}) $ and $ T : (X, Y, S \circ R_1^{j_p}) \multimap (X, X, T \circ S \circ R_1^{j_p}) $ are well-defined mr-morphisms. We have $ T \circ S = R_1^l $. It implies that $ T \circ S \circ R_1^{j_p} = R_1^{l+j_p} $. We obtain $ K_{R_1^{j_p}} \leq K_{ S \circ R_1^{j_p}} \leq K_{T \circ S \circ R_1^{j_p}} = K_{R_1^{l+j_p}} $. The eventual period of $R_1$ is $(j_p, p)$. It implies that $ K_{R_1^{j_p}} = K_{R_1^{j_p+l}} $. We obtain that $K_{R_1^{j_p}} = K_{ S \circ R_1^{j_p}} $.

			We can see that $S : (Y, Y, R_2^{j_p}) \multimap (X, Y, R_2^{j_p} \circ S) $ and $ T : (X, Y, R_2^{j_p} \circ S) \multimap (Y, Y, R_2^{j_p} \circ S \circ T) $ are well-defined ml-morphisms. We have $ S \circ T = R_2^l $. It implies that $ R_2^{j_p} \circ S \circ T = R_2^{j_p+l} $. We obtain $ L_{R_2^{j_p}} \leq L_{R_2^{j_p} \circ S} \leq L_{R_2^{j_p} \circ S \circ T} = L_{R_2^{j_p+l}} $. We have $ L_{R_2^{j_q}} = L_{R_2^{j_p+l}} $ because $ j_p \geq j_q $. We obtain $ L_{R_2^{j_p}} = L_{R_2^{j_p} \circ S} $.
			
		Finally, we have $ K_{R_1^{j_p}} = K_{ S \circ R_1^{j_p}} = K_{R_2^{j_p} \circ S} $ and $ L_{R_2^{j_p}} = L_{R_2^{j_p} \circ S} = L_{S \circ R_1^{j_p}} $. By Dowker's Theorem, we obtain that $| K_{R_1^{j_p}} |$, $ |K_{R_2^{j_q}}| $, $| L_{R_1^{j_p}}|$ and $ |L_{R_2^{j_q}}| $ are homotopy equivalent. 
				
		\end{proof}				
				
		\begin{example} \label{exShift}
			
			Let $ X $ be a finite set with $ 8 $ points and $ Y $ be a finite set with $3$ points. Let $ R_1$ be a self-relation on $X$ and $ R_2 $ be a self-relation on $X$ defined by those graphs in Figure \ref{figExGraphShift}. $ R_1 $ has an eventual period $(3, 3)$ and $ R_2 $ has an eventual period $(1, 3)$. We see in Figures \ref{figExDowCmpShift}(a), (b) and (d) that the Dowker complexes are not homotopically equivalent. But, in Figures \ref{figExDowCmpShift} (c) and (d), the Dowker complexes with relations at power $3$ are homotopically equivalent. 
			
    	\begin{figure}
  			\center
  			\subfigure[ The graph $ G_{R_1} $. ]{
   				\includegraphics[height=6cm, width=5cm, scale=1.00, angle=0 ]{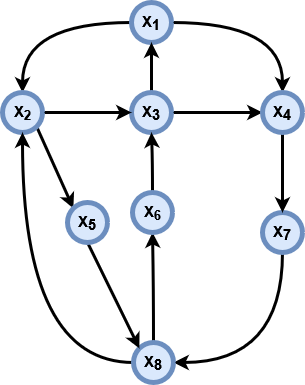}
  			}
  			\,
  			\subfigure[ The graph $ G_{R_2} $.]{
   				\includegraphics[height=5cm, width=5cm, scale=1.00, angle=0 ]{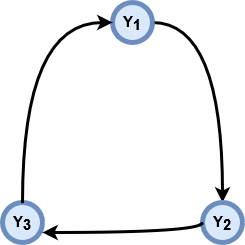}
            }
            \caption{Theses are the graphs from Example \ref{exShift}.}
            \label{figExGraphShift}
         \end{figure}		
						
		\begin{figure}
  			\center
  			\subfigure[ Dowker complex of $ |K_{R_1}| $. ]{
   				\includegraphics[height=6cm, width=5cm, scale=1.00, angle=0 ]{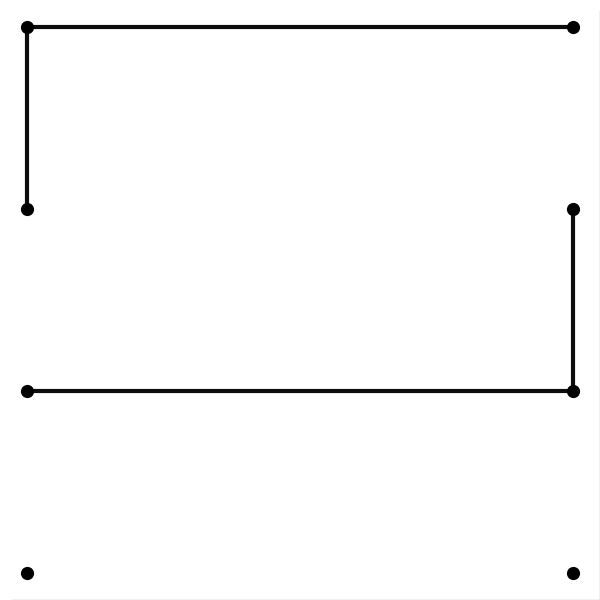}
  			}
  			\quad
  			\subfigure[ Dowker complex of $ |L_{R_1}| $. ]{
   				\includegraphics[height=6cm, width=5cm, scale=1.00, angle=0 ]{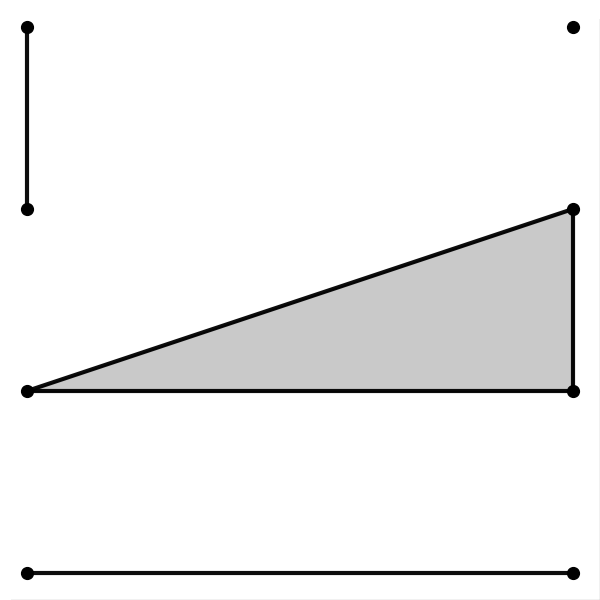}
  			}
  			\quad
  			\subfigure[ Dowker complexes of $ |K_{R_1^3}| $ and $ |L_{R_1^3}| $. ]{
   				\includegraphics[height=5cm, width=6cm, scale=1.00, angle=0 ]{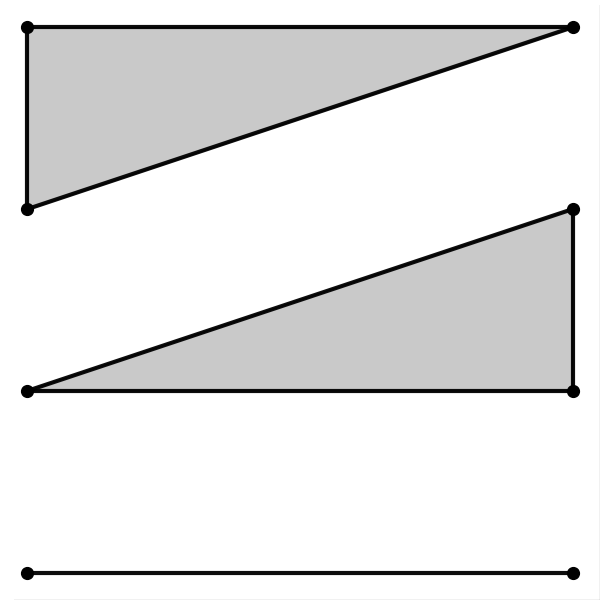}
            }
            \quad
			\subfigure[ Dowker complexes $ |K_{R_2}|, |L_{R_2}|, |K_{R_2^3}| $ and $ |L_{R_2^3}| $. ]{
   				\includegraphics[height=5cm, width=6cm, scale=1.00, angle=0 ]{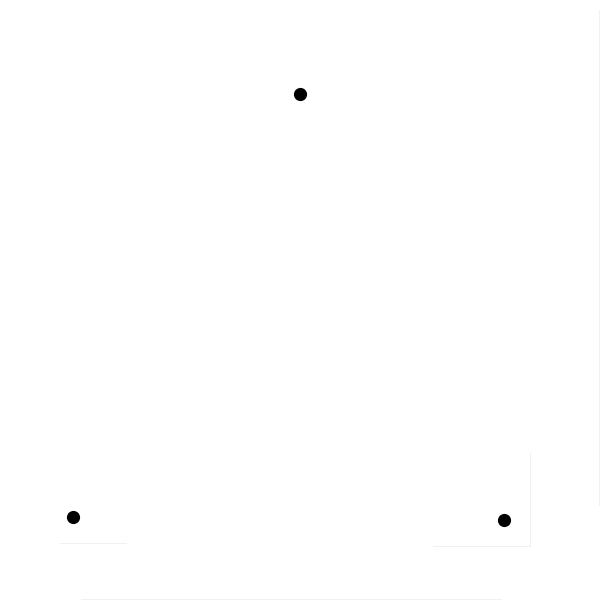}
            }
            \caption{Theses are the different Dowker complexes from Example \ref{exShift}.}\label{figExDowCmpShift}
         \end{figure}		
		\end{example}

		There is an interesting proposition from \cite{arShiftBool} that we can use for strongly connected relations. We remind that an indecomposable Boolean matrix is a relation which is strongly connected and $J$ is a square matrix where all the entries are equals to $1$.
		
		\begin{proposition}[Proposition 4.3 in \cite{arShiftBool} ]	\label{propBoolMatJ}
			Every indecomposable Boolean matrix with positive trace is strong shift equivalent to $J$.
		\end{proposition}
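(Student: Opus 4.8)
The plan is to reduce the statement to a primitivity fact plus an essentially immediate shift-equivalence computation, so that the only real work is the passage to \emph{strong} shift equivalence. First I would translate the hypotheses: ``indecomposable'' says the digraph $G_A$ is strongly connected, and ``positive trace'' says that some vertex $v$ carries a self-loop. Strong connectivity together with the loop at $v$ makes $A$ primitive, i.e.\ there is a $k$ with $A^m = J$ (Boolean arithmetic) for all $m \ge k$: every vertex reaches $v$ along a walk of length at most $n-1$ and is reached from $v$ along a walk of length at most $n-1$, and iterating the loop at $v$ pads such a route to any prescribed length, so $A^m = J$ already for $m \ge 2n-1$. This primitivity is the only way I would use the hypotheses.

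Once $A^k = J$ is available, $A$ is shift equivalent to $J$: in the definition of shift equivalence take both connecting relations $S$ and $T$ to be the full relation $X \times X$. Since $A$ is strongly connected, $\Dom A = X = \Ima A$, hence $S \circ A = J = J \circ S$ and $A \circ T = J = T \circ J$, while $T \circ S = X \times X = J = A^k$ and $S \circ T = X \times X = J = J^k$; so all four defining identities hold with lag $\ell = k$. Thus the whole content of Proposition \ref{propBoolMatJ} is the upgrade from this (lag-$k$) shift equivalence to a \emph{strong} shift equivalence, i.e.\ to a finite chain of lag-$1$ equivalences joining $A$ to $J$. I see two routes. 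The clean one is to invoke the structural fact, particular to the Boolean semiring and false over $\mathbb{N}$, that there shift equivalence and strong shift equivalence coincide --- equivalently, that indecomposable Boolean matrices are classified up to strong shift equivalence by their period --- which belongs to the theory of Boolean shift equivalence developed in \cite{arShiftBool}; since $A$ and $J$ are both primitive, this finishes at once. The self-contained one is to build an explicit chain of lag-$1$ equivalences from $A$ to $J$, realised as a sequence of state splittings and amalgamations of $G_A$ (each such graph move being a lag-$1$ equivalence via the classical factorization identities, which hold verbatim over the Booleans), ending at the $1 \times 1$ matrix $[1]$; one then notes that $[1]$ is strong shift equivalent to $J_m$ for every $m$ via the all-ones $m \times 1$ and $1 \times m$ relations, so the size of the target $J$ is irrelevant.

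The main obstacle is exactly this upgrade. Along the first route it is concentrated in the cited Boolean theorem that shift equivalence implies strong shift equivalence, which is itself substantial. Along the second route it is the existence of a splitting/amalgamation sequence that genuinely terminates at $[1]$ while every intermediate matrix remains indecomposable with positive trace --- and this is precisely where primitivity, not merely strong connectivity, is indispensable; tracking how the in- and out-neighbourhoods evolve under iterated splitting is the delicate point. I would write the proof along the first route whenever \cite{arShiftBool} makes that equivalence of notions available, spending the remaining effort only on the routine primitivity estimate and the immediate verification that $A$ is shift equivalent to $J$, and keep the combinatorial route in reserve for a fully self-contained treatment.
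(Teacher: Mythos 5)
First, a point of comparison: the paper does not prove this statement at all. Proposition \ref{propBoolMatJ} is imported verbatim as Proposition 4.3 of \cite{arShiftBool} and used as a black box (to conclude that Dowker complexes of strongly connected relations with positive trace are eventually contractible). So there is no in-paper argument to measure you against; what can be assessed is whether your proposal would stand on its own as a proof.

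The preliminary reductions you make are correct and cleanly done: indecomposable means $G_A$ is strongly connected, positive trace gives a self-loop, the padding argument gives $A^m=J$ (Boolean arithmetic) for all $m\ge 2n-2$, and taking $S=T$ to be the full relation does verify all four identities of the paper's definition of shift equivalence with lag $k$ (using $\Dom A=X=\Ima A$, which strong connectivity supplies). But, as you yourself flag, none of this touches the actual content of the proposition, which is the word \emph{strong}. Neither of your two routes closes that gap. Route (a) invokes ``over the Boolean semiring, shift equivalence coincides with strong shift equivalence (for indecomposable matrices)''; that assertion is at least as deep as the proposition being proved and is essentially the main theorem of the very reference \cite{arShiftBool} from which the proposition is quoted, so writing the proof this way is close to circular --- you would be citing the source's classification theorem to reprove one of its corollaries. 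Route (b), the chain of state splittings and amalgamations terminating at $[1]$, is the genuinely self-contained path, and your observation that $J_n$ is elementarily strong shift equivalent to $[1]$ via the all-ones column/row factorization is correct; but the existence and termination of such a splitting sequence for an arbitrary primitive Boolean matrix is exactly the combinatorial heart of the result, and you give no construction or induction for it. As submitted, the proposal is a sound reduction plus an accurate map of where the difficulty lies, but not a proof; to be honest to the paper's usage you could simply cite \cite{arShiftBool} as the paper does, and to be self-contained you would need to actually execute route (b).
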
 	
		
		We can easily compute the Dowker complexes of $J$. It is a $(n-1)$-simplex where $n$ is the number of rows of $J$ for both Dowker complexes. Finally, we obtain that the Dowker complexes of a strongly connected self-relation at a power high enough are contractible if the trace is strictly positive.

	\section{Filtrations of Dowker complexes}

		For this section, we suppose that a $ R $ is a finite self-relation on $X$. From Corollary \ref{lblCorRelPower}, if $ \Dom R = X $, then we have $ K_{R^i} \leq K_{R^{i+1}} $ for all $ i \geq 0 $. We have an inclusion and we get this filtration :
		
		\begin{equation} \label{eqFilK}
			K_{R} \hookrightarrow K_{R^2} \hookrightarrow \ldots \hookrightarrow K_{R^i} \hookrightarrow K_{R^{i+1}} \hookrightarrow  \ldots
		\end{equation}
		
		In the same way with $ \Ima R = X $, we have $ L_{R^{i}} \leq L_{R^{i+1}} $. We obtain another filtration :
		
		\begin{equation}\label{eqFilL}
			L_{R} \hookrightarrow L_{R^2} \hookrightarrow \ldots \hookrightarrow L_{R^i} \hookrightarrow L_{R^{i+1}} \hookrightarrow  \ldots		
		\end{equation}				
		
		From Corollary \ref{corEvePerRel}, the Dowker complexes stabilize at a certain power. This means we can compute the filtration (\ref{eqFilK}) and (\ref{eqFilL}) in finite time. For our filtrations, we start at $ i = 1 $, but we could also start with the $ i = 0 $. We have that $ R^0 = id_X $ and  $ R : (X, X, id_X) \multimap (X, X, R) $ is a well-defined mr-morphism, if $ \Dom R = X $. Then, the filtration (\ref{eqFilK}) becomes :
		
		\begin{equation}
			K_{id_X} = K_{R^0} \hookrightarrow K_{R} \hookrightarrow K_{R^2} \hookrightarrow	\ldots \hookrightarrow K_{R^i} \hookrightarrow K_{R^{i+1}} \hookrightarrow  \ldots
		\end{equation}
		
		And for the filtration (\ref{eqFilL}) by applying similar arguments, we obtain : 
		
		\begin{equation}
			L_{id_X} = L_{R^0} \hookrightarrow L_{R} \hookrightarrow L_{R^2} \hookrightarrow	\ldots \hookrightarrow L_{R^i} \hookrightarrow L_{R^{i+1}} \hookrightarrow  \ldots	
		\end{equation}
		
		The homology of $ K_{id_X} $ and $ L_{id_x} $ is the homology of $n = card(X)$ points. In some cases, we might want to start the filtration at $ i = 0 $ or $ i = 1 $. 
		
		We remind that, by Dowker's Theorem, $ |K_{R^i}| $ and $ |L_{R^i}| $ are homotopically equivalent for all $i \in \mathbb{N} $. We obtain the same bar code representation for filtrations (\ref{eqFilK}) and (\ref{eqFilL}). 
		
		\begin{example}\label{examGraphAcyclic}
			Let $ R_1 $ be a self-relation on $X$ given by the graph in the Figure \ref{figGraphExamFil}(a). $R_1$ has 9 nodes and is acyclic. The eventual period is $ (3, 1) $. We obtain the bar code at Figure \ref{figGraphExamFil}(b). It has one generator of $ H_1 $ with the interval $ [1, 2] $ and we had $ 3 $ generators of $ H_0 $ that die early and $ 1 $ generator of $ H_0 $ that survive to infinity.
		\end{example}
		
		\begin{example}\label{examCraphCyclic}
			Let $R_2$ be a self-relation on $X$ given by the graph in the Figure \ref{figGraphExamFil}(c). $ R_2 $ has 10 nodes and is simple. The eventual period is $ (3, 4) $. We obtain the bar code at the Figure \ref{figGraphExamFil}(d). It has $ 4 $ generators of $ H_0$ that die at time $2$ and $2$ other generators that survive to infinity. 
		\end{example}
		
		\begin{figure}
  			\center
  			\subfigure[ The graph of the acyclic relation from Example \ref{examGraphAcyclic}. ]{
   				\includegraphics[height=6cm, width=6cm, scale=1.00, angle=0 ]{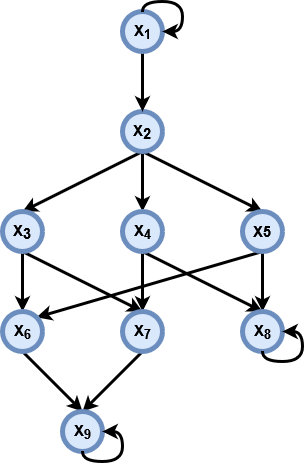}
  			}
  			\quad
  			\subfigure[ The associated bar code of the relation from Example \ref{examGraphAcyclic}. We use the filtration with $ K_{R_1^i} $. The first bar in orange is a generator in $H_1$ and the others four bars in blue are generators in $ H_0 $. ]{
   				\includegraphics[height=6cm, width=7cm, scale=1.00, angle=0 ]{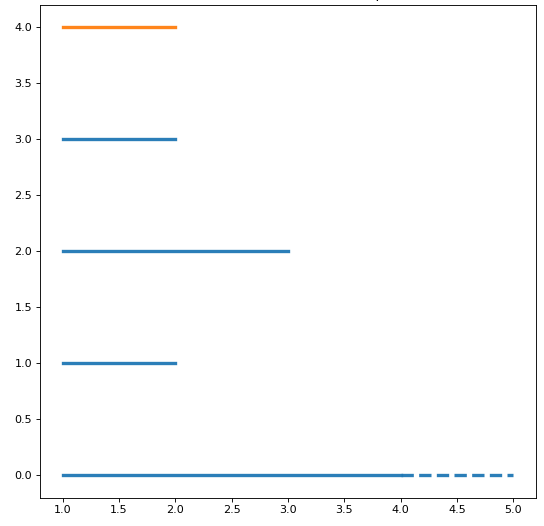}
  			}
  			\quad
  			\subfigure[ The graph of the relation with multiple cycles from Example \ref{examCraphCyclic}.]{
   				\includegraphics[height=6cm, width=6cm, scale=1.00, angle=0 ]{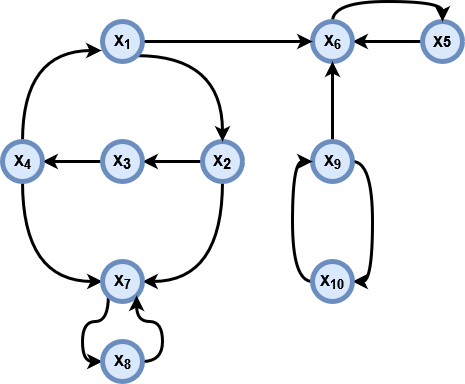}
            }
              			\subfigure[ The associated bar code of the relation from Example \ref{examCraphCyclic}. We use the filtration with $ K_{R_2^i} $. The six bars in blue are generators in $ H_0 $. ]{
   				\includegraphics[height=6cm, width=7cm, scale=1.00, angle=0 ]{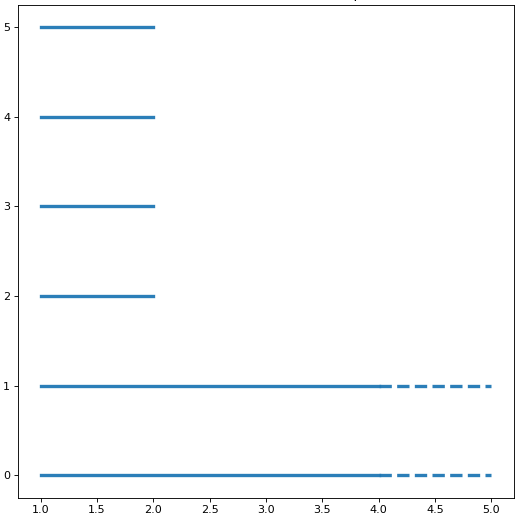}
  			}
  			\quad
            \caption{In these figures, we have the graph on the left and the associated bar code diagram on the right for Examples \ref{examGraphAcyclic} and \ref{examCraphCyclic}. Dashed lines in bar code mean it goes to infinity. }
            \label{figGraphExamFil}
		\end{figure}				
			
			For the next results, we compute the $0$th homology of the Dowker complexes for different types of relation. 			
				
			If $ R $ is a finite acyclic self-relation, then it has an eventual period $p = 1$ and $ j \in \mathbb{N}_{>0} $ such as $ R^j = R^i $ for all $i \geq j $. So we denote this relation $ R^j $ by $ R^{\infty} $, because it converges to a relation when $ i \to \infty $.	
				
		\begin{definition}\label{dfnMinMaxRel}
			We say that $ x \in X $ is a minimum for a self-relation $ R $, if there exists no $ y \in X $ such as $ x \neq y $ and $ x R^{\infty} y $. We denote the set $ U_x := \{ y \in X \mid y R^{\infty} x \} $.
			
			We say that $ x \in X $ is a maximum for a self-relation $ R $, if there exists no $ y \in X $ such as $ x \neq y $ and $ y R^{\infty} x $. We denote the set $ D_x := \{ y \in X \mid x R^{\infty} y \}  $.
		\end{definition}		

			The maximums and minimums of an acyclic relation are important, because they are responsible for the maximal simplices of $ K_{R^{\infty}} $ and $ L_{R^{\infty}} $.

		\begin{lemma}\label{lemMaxSimp}
			Let $ R $ be a finite acyclic self-relation on $X$ with $ \Dom R = X $. Then, the maximal simplices of $ K_{R^{\infty}} $ are given by the minimums of $ R^{\infty} $.
 		\end{lemma}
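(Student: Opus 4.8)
The plan is to make the statement precise as: the maximal simplices of $K_{R^{\infty}}$ are exactly the sets $U_x=\{y\in X\mid yR^{\infty}x\}$ with $x$ a minimum of $R^{\infty}$ (Definition~\ref{dfnMinMaxRel}), and the assignment $m\mapsto U_m$ is a bijection from the minimums onto these maximal simplices. I would first collect a few properties of $R^{\infty}$. Since $R$ is acyclic it has eventual period $(j,1)$ (Definition~\ref{dfnEventPeriod}), so $R^{\infty}=R^{j}$ and $R^{j}=R^{i}$ for all $i\ge j$; hence $R^{\infty}\circ R^{\infty}=R^{2j}=R^{\infty}$, so $R^{\infty}$ is transitive. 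From $\Dom R=X$ one gets $\Dom R^{n}=X$ for every $n\ge 1$ by a one-line induction, so $\Dom R^{\infty}=X$; consequently every minimum $x$ satisfies $xR^{\infty}x$ (it is $R^{\infty}$-related to \emph{something}, and minimality excludes anything other than $x$). By Definition~\ref{dfnDowHor}, any nonempty $U_y$ is a simplex of $K_{R^{\infty}}$ (all of its vertices are related to $y$), and conversely every simplex $[x_1,\dots,x_n]\in K_{R^{\infty}}$ lies inside the $U_y$ given by a common witness $y$ with $x_iR^{\infty}y$ for all $i$. In particular each $U_x$ with $x$ a minimum is a genuine (nonempty) simplex, as $x\in U_x$.

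Next I would pass to the strict relation $x\prec y\ :\Longleftrightarrow\ xR^{\infty}y\text{ and }x\neq y$. Acyclicity forces $R^{\infty}$ to be antisymmetric ($xR^{\infty}y$ and $yR^{\infty}x$ with $x\neq y$ would produce a closed walk through the two distinct vertices $x,y$), so together with transitivity $\prec$ is a strict partial order on the finite set $X$. A minimum of $R^{\infty}$ is precisely a $\prec$-maximal element, so for each $y\in X$ one climbs a $\prec$-chain to get a minimum $m$ with $U_y\subseteq U_m$ (trivially if $y=m$, and via transitivity of $R^{\infty}$ otherwise). Combined with the previous paragraph, every simplex of $K_{R^{\infty}}$ lies in $U_m$ for some minimum $m$, and each such $U_m$ is itself a simplex, so the maximal simplices of $K_{R^{\infty}}$ are exactly the inclusion-maximal members of $\{U_m\mid m\text{ a minimum}\}$. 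Finally, the sets $U_m$ ($m$ a minimum) are pairwise incomparable: if $U_x\subseteq U_{x'}$ for minimums $x,x'$, then $x\in U_x\subseteq U_{x'}$, i.e. $xR^{\infty}x'$, and minimality of $x$ forces $x'=x$. Hence every $U_m$ is a maximal simplex, $m\mapsto U_m$ is injective, and its image is exactly the set of maximal simplices, which is the claim.

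The bookkeeping about $\Dom$ and powers of $R$, and the verification that $\prec$ is a strict partial order, are routine. The step I expect to be the real obstacle is passing from ``$\sigma$ is a maximal simplex'' to ``$\sigma=U_m$ for a minimum $m$'': the index $y$ with $\sigma\subseteq U_y$ need not itself be a minimum, so one has to climb $\prec$ to a $\prec$-maximal element while keeping track that the sets $U_{(\cdot)}$ only grow along the chain, and the clean way to do this leans on the observation that $xR^{\infty}x$ holds for every minimum $x$ — which is precisely where the hypothesis $\Dom R=X$ is essential and without which the whole argument (and indeed the statement) fails.
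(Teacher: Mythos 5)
Your proof is correct and follows the same core argument as the paper's: use $\Dom R^{\infty}=X$ to attach to each simplex's witness $y$ a minimum $z$ with $yR^{\infty}z$, then transitivity of $R^{\infty}$ (from $R^{\infty}\circ R^{\infty}=R^{\infty}$) to conclude $\sigma_y\subseteq\sigma_z$. You go somewhat further than the paper, which stops at showing every simplex is contained in some $\sigma_z$ with $z$ a minimum; your additional verification that the sets $U_m$ are themselves simplices, pairwise incomparable, and in bijection with the minimums is a sound completion of what the paper leaves implicit.
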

		\begin{proof}
			We have that $ \Dom R = X $ it implies that $ \Dom R^{\infty} = X $. Then, for all $ x \in X $ there exists $ z \in X $ such that $ x R^{\infty} z $ and $ z $ is a minimum. Let $ \sigma_y = [x_1, x_2, \ldots, x_n] \in K_{R^{\infty}} $ be an arbitrary simplex. We have an $ y \in X $ such as $ x_i R^{\infty} y $ for all $ i =1,2, \ldots, n $. By the first argument, there exists a minimum $ z \in X $ such as $ y R^{\infty} z$. Therefore, we have $ x_i R^{\infty} y R^{\infty} z $. Then, $ \sigma_y \subset \sigma_z $.
		\end{proof}		
		
		We can do a similar result with $ L_{R^\infty} $ by using the maximums of $R^{\infty}$, if $ \Ima R = X $.			
		
		\begin{theorem}\label{corGradDowker}
			Let $ R  $ an acyclic finite self-relation on $X$ with $\Dom R = X $. 
			\begin{equation} \label{eqGradDowker}
				 \text{ number of connected components of } G_R  = \dim H_0(K_{R^{\infty}}) = \dim H_0(L_{R^{\infty}})
			\end{equation}						
			
		\end{theorem}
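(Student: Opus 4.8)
The plan is to read off the second equality from Dowker's Theorem and to reduce the first one to showing that two partitions of $X$ coincide.

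First I would dispose of $\dim H_0(K_{R^{\infty}}) = \dim H_0(L_{R^{\infty}})$: by Dowker's Theorem (\ref{thmDowker}) the polyhedra $|K_{R^{\infty}}|$ and $|L_{R^{\infty}}|$ are homotopy equivalent, and $H_0$ is a homotopy invariant. For the remaining equality I would use that $\dim H_0$ of a simplicial complex equals its number of connected components (applying the quoted equivalence ``connected $\iff$ edge-connected $\iff \dim H_0 = 1$'' componentwise). Since $\Dom R = X$ gives $\Dom R^n = X$ for every $n$, hence $\Dom R^{\infty} = X$, every $x \in X$ is a vertex of $K_{R^{\infty}}$, so both the components of $K_{R^{\infty}}$ and the components of $G_R$ are partitions of the same set $X$; it therefore suffices to prove these partitions are equal. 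Write $x \sim_K y$ (resp. $x \sim_G y$) when $x,y$ lie in the same component of $K_{R^{\infty}}$ (resp. of $G_R$).

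For $\sim_K \Rightarrow \sim_G$ I would argue that any simplex of $K_{R^{\infty}}$ is contained in some $\sigma_z = \{\, u \in X : u\, R^{\infty}\, z \,\}$, and that each $u$ with $u\, R^{\infty}\, z$ is joined to $z$ by a directed (in particular undirected) walk in $G_R$ coming from a witness of $u\, R^{j}\, z$, where $j$ is the index of $R$; hence all vertices of a single simplex are $\sim_G$-equivalent, and chasing the edge-path that witnesses edge-connectedness in $K_{R^{\infty}}$ shows that each $K_{R^{\infty}}$-component lies inside one $G_R$-component. For the reverse inclusion I would reduce to a single edge $x\, R\, x'$ of $G_R$ (the case $x'\, R\, x$ is symmetric) and then close under transitivity along undirected paths. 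Here acyclicity enters: it forces the eventual period of $R$ to be $(j,1)$, so $R^{\infty} = R^{j} = R^{j+1}$. Picking any $z \in R^{\infty}(x')$ (possible since $\Dom R^{\infty} = X$) gives $x'\, R^{j}\, z$, and composing $x\, R\, x'$ with $x'\, R^{j}\, z$ yields $x\, R^{j+1}\, z = x\, R^{\infty}\, z$; thus $x$ and $x'$ both belong to $\sigma_z \in K_{R^{\infty}}$, so $x \sim_K x'$. Combining the two inclusions gives the desired equality of partitions, hence the identity of the component count of $G_R$ with $\dim H_0(K_{R^{\infty}})$.

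I expect the only genuinely delicate point to be the reverse inclusion, where acyclicity must be used precisely to guarantee $p = 1$ so that $R^{\infty}$ is the honest power $R^{j}$ with $R^{j} = R^{j+1}$, and where one checks that prepending one $R$-step to a walk realizing $R^{j}$ still lands in $R^{\infty}$; the rest — homotopy invariance of $H_0$, the component-count interpretation of $\dim H_0$, and the manipulation of undirected paths — should be routine.
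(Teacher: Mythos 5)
Your proposal is correct, but it takes a genuinely different route from the paper. The paper first proves the connected case by an explicit edge-path construction in $K_{R^{\infty}}$: it invokes the minimums of $R^{\infty}$ (Definition \ref{dfnMinMaxRel} and Lemma \ref{lemMaxSimp}), takes a shortest $(y_1,y_n)$-path in $G_R$ between two minimums, and walks along it, switching from the basin $U_{y_1}$ of one minimum to the next to produce a chain of edges joining $[x]$ to $[x']$; it then handles the disconnected case by a separate bijection argument. You instead show directly that the partition of $X$ into components of $G_R$ coincides with the partition into components of $K_{R^{\infty}}$, and your key step is local: a single directed edge $x\,R\,x'$ together with any $z \in R^{\infty}(x')$ forces both $x$ and $x'$ into the common simplex $\sigma_z$, because acyclicity gives $p=1$ and hence $R^{j+1}=R^{j}=R^{\infty}$ absorbs the extra step; the converse inclusion follows from the directed walks witnessing $u\,R^{j}\,z$. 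Your argument is shorter, bypasses the minimum/maximum machinery entirely, and isolates exactly where acyclicity is used (namely $R^{j}=R^{j+1}$); the paper's construction is heavier but exhibits the explicit connecting edges through minimums, which dovetails with Lemma \ref{lemMaxSimp} and the description of the maximal simplices of $K_{R^{\infty}}$ used elsewhere. Both arguments rely on the same two external facts (Dowker's Theorem for the second equality, and the identification of $\dim H_0$ with the number of edge-connected components), and your handling of $\Dom R = X \Rightarrow \Dom R^{\infty} = X$ and of the composition $x\,R\,x'$, $x'\,R^{j}\,z \Rightarrow x\,R^{j+1}\,z$ is sound under the paper's conventions.
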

		\begin{proof}
			First, by Dowker's Theorem, we have $ \dim H_0(K_{R^{\infty}}) = \dim H_0(L_{R^{\infty}}) $. We suppose that $ G_R $ is connected and show that $ \dim H_0(K_{R^{\infty}}) = 1 $. More precisely, we show that $ K_{R^{\infty}} $ is edge-connected. We have that $ \Dom R = X $ implies that for all $ x \in X $,$[ x ] \in K_{R^{\infty}} $.
			
			 Let $ x, x' \in X $. There exists $ y_1 \in X $ a minimum such as $ x R^{\infty} y_1 $ and $ y_1 R^{\infty} y_1 $. This implies that $ e_1 = [x, y_1] \in K_{R^{\infty}} $. We also have that there exists $ y_n \in X $ a minimum such as $ x' R^{\infty} y_n $ and $ y_n R^{\infty} y_n $. This implies that $ e_n = [x', y_n] \in K_{R^{\infty}} $.
			
			Since $ G_R $ is connected, there exists a $(y_1, y_n)$-path of finite length. We denote this sequence by $ y_1, z_1, z_2, z_3,$ $\ldots, z_m, y_n $. Without loss of generality, we take the shortest path. There exist a $ i $ such as $ z_i \in U_{y_1}$ and $ z_{i+1} \notin U_{y_1} $. First, we have $ e_2 = [y_1, z_i] \in K_{R^{\infty}} $ and $ z_i R z_{i+1} $. There exists $ y_2 \neq y_1 $ a minimum such as $ z_{i+1} R^{\infty} y_2 $. This implies that $ z_i R^{\infty} y_2 $ and we have the edge $ e_3 = [z_i, y_2] \in K_{R^{\infty}} $. We can repeat this process with the $ (y_2, y_n) $-path until we obtain a sequence of edges that connect the vertex $ [x] $ and $ [x'] $. We obtain that $ K_{R^{\infty}} $ is edge-connected.
			
			Now suppose that $ G_R $ is not connected. Let $ H $ be a connected component of $ G_R $. Then, for all $ x \in H $ and for all $ y \notin H $, we have that $ x \notin R(y) $ and $ y \notin R(x)$. It implies that for each connected component gives a single generator for $ H_0(K_{R^{\infty}}) $. 

	We can construct a map $ j : cc(G_R) \to H_0(K_{R^{\infty}}) $ that sends the connected components of $ G_R $ to the generators of $H_0(K_{R^{\infty}})$. By the previous argument, we can make this map $j$ injective . For any generator $g$ in $ H_0(K_{R^{\infty}}) $, there exists a $ x \in X $ such as $g$ is homologous to $[x]$ because $ \Dom R = X $. This implies there exists a $ H \in cc(G_R) $ such as $ x \in H $. We obtain that the map $ j $ is bijective and the equality (\ref{eqGradDowker}).

		\end{proof}
		
		We can show a similar proof for simple relations.
		
		\begin{theorem}\label{thDimH0simpleRel}
			Let $ R $ be a finite simple self-relation on $X$ with $ \Dom R = X $ and $(j, p)$ be an eventual period. Assume that $ G_R $ is connected. There exists a $r \in \mathbb{N}$ such that :
			\begin{equation}
				\dim H_0(K_{R^j}) = \dim H_0(L_{R^j}) = \text{ number of connected components of } G_{R^r}.
			\end{equation}
		\end{theorem}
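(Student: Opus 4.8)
My plan is to bypass the cycle structure of $R$ entirely and reduce the statement to a single idempotent power. By Dowker's Theorem (Theorem~\ref{thmDowker}) the equality $\dim H_0(K_{R^j}) = \dim H_0(L_{R^j})$ is free, so everything comes down to producing one $r$ with
\[
 \text{number of connected components of } G_{R^r} \;=\; \dim H_0(K_{R^j}).
\]
I would take $r := pj$ (any positive multiple of $p$ that is at least $j$ works) and set $S := R^r$. First I record three easy facts. (1) $S$ is idempotent, $S\circ S = S$: from $R^j = R^{j+p}$ one gets $R^a = R^{a+p}$ for every $a\geq j$, hence $R^a = R^b$ whenever $a,b\geq j$ and $a\equiv b\pmod{p}$; applying this to $a = 2r$ and $b = r$ gives $R^{2r}=R^r$. (2) $\Dom S = X$, because $\Dom R = X$ forces $\Dom R^n = X$ for all $n$ (any walk can be extended one more step). (3) $K_S = K_{R^r} = K_{R^j}$ by the stabilization Corollary~\ref{corEvePerRel}, so $\dim H_0(K_S) = \dim H_0(K_{R^j})$.

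The heart of the argument is a lemma that mentions only $S$: \emph{if $S$ is an idempotent relation on $X$ with $\Dom S = X$, then the graph $G_S$ and the $1$-skeleton of $K_S$ (the graph of its vertices and $1$-simplices) have the same connected components, so the number of connected components of $G_S$ equals $\dim H_0(K_S)$.} I would prove this by comparing the two graphs on the common vertex set $X$. If $x\neq x'$ and $x S x'$, then idempotency gives $S(x')\subseteq S(x)$ — since $z\in S(x')$ together with $x S x'$ yields $(x,z)\in S\circ S = S$ — so $\emptyset\neq S(x') = S(x)\cap S(x')$ and therefore $[x,x']\in K_S$ (and symmetrically if $x' S x$); thus every edge of $G_S$ is an edge of $K_S$. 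Conversely, any edge $[x,x']\in K_S$ comes with a $z$ satisfying $x S z$ and $x' S z$, so $x$ and $x'$ are both $G_S$-neighbours of $z$ and lie in one component of $G_S$. Hence the two graphs induce the same partition of $X$ into components, and combining this with the paper's equivalences among connectedness, edge-connectedness and $\dim H_0 = 1$ (applied componentwise) shows that $\dim H_0(K_S)$ equals the number of connected components of $G_S$.

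Chaining the three facts with the lemma then gives
\[
 \text{number of connected components of } G_{R^r} \;=\; \dim H_0(K_S) \;=\; \dim H_0(K_{R^j}) \;=\; \dim H_0(L_{R^j}),
\]
which is the claimed identity with $r = pj$. The step I expect to carry the weight is the idempotent lemma, and within it the inclusion $S(x')\subseteq S(x)$ whenever $x S x'$: this is the only point at which $S\circ S = S$ is used, and it is exactly what promotes a bare $G_S$-edge $\{x,x'\}$ to a $1$-simplex of $K_S$; the reverse passage (from a $1$-simplex of $K_S$ to a $G_S$-path of length two through a common image) is immediate. I would also point out that the argument uses neither simplicity of $R$ nor connectedness of $G_R$; these hypotheses align the statement with the acyclic Theorem~\ref{corGradDowker}, where $p = 1$, one may take $r = j$, and $G_{R^r}$ even has the same connected components as $G_R$, but they are not needed for the conclusion as stated here.
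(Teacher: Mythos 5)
Your proof is correct, but it follows a genuinely different route from the paper. The paper's proof leans on the simplicity hypothesis: it finds a power $q$ for which $R^q$ is acyclic, sets $r=iq>j$, and then delegates all the real work to Theorem~\ref{corGradDowker} (the acyclic case), whose edge-connectivity argument via minima of $R^{\infty}$ is the substantive step. You instead exploit eventual periodicity to produce an idempotent power $S=R^{pj}$ and prove a clean, self-contained lemma: for idempotent $S$ with $\Dom S=X$, the inclusion $S(x')\subseteq S(x)$ whenever $xSx'$ promotes every edge of $G_S$ to a $1$-simplex of $K_S$, while every $1$-simplex of $K_S$ yields a length-two path in $G_S$ through the common image, so $G_S$ and the $1$-skeleton of $K_S$ partition $X$ identically. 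I checked the details --- the idempotency $R^{2pj}=R^{pj}$ from $R^a=R^{a+p}$ for $a\geq j$, the persistence of $\Dom R^n=X$, and the identification $K_{R^{pj}}=K_{R^j}$ via Corollary~\ref{corEvePerRel} --- and they all hold. Your observation that neither simplicity nor connectedness of $G_R$ is needed is also correct (and your argument proves the stronger statement for arbitrary finite self-relations with full domain, with the explicit witness $r=pj$); the paper's hypotheses serve only to let it reuse the acyclic machinery, at the cost of the unproved (and not entirely obvious) claim that a simple relation admits an acyclic power. What your approach gives up is the structural picture the paper builds (maximal simplices indexed by minima of $R^{\infty}$); what it buys is generality and a shorter, more elementary argument.
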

		\begin{proof}
			We can find $ q $ big enough so that $ R^q $ is acyclic, because $ R $ is a simple relation. We choose a $ i \in \mathbb{N} $ such as $ iq > j $. We fix $ r = iq $. We also have that $ R^{r} $ is also acyclic. By Corollary \ref{corEvePerRel}, we have that $ K_{R^j} = K_{R^r} $. By Theorem \ref{corGradDowker}, we know that $ \dim H_0(K_{R^j}) = \text{ number of connected components of } R^{r} $.  
		\end{proof}				
		
		If $ G_R $ has more than one connected component, we apply this theorem for each connected component of $ G_R$ by using similar arguments as the proof of Theorem \ref{corGradDowker}. From preceding results, if $ \Dom  R \neq X $ but $ \Ima R = X $, we can redo the proofs with $ L_{R^j} $. Another approach is to use $ R^{-1} $, because $ \Dom R^{-1} = X $.

		\begin{remark}
			In Example \ref{examCraphCyclic}, it is a simple relation. We have $ R_2^4 $ is acyclic. The graph of $ R_2^4 $ is shown in Figure \ref{figgraphR4}. It has two connected components and the bar code from Figure \ref{figGraphExamFil}(b) has $2$ bars goes to infinity. It is expected from Theorem \ref{thDimH0simpleRel}.
		\end{remark}		
		
		\begin{figure}
			\center
   			\includegraphics[height=5cm, width=7cm, scale=1.00, angle=0 ]{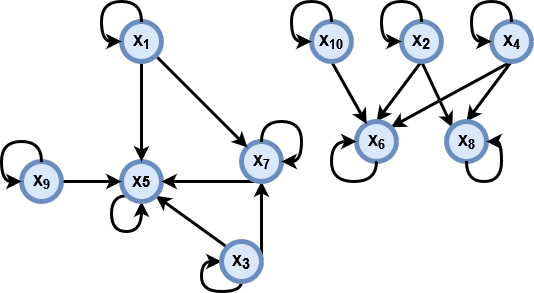}
 			\caption{The graph of $ R_2^4 $ from Example \ref{examCraphCyclic}. $ G_{R_2^4} $ has 2 connected components.}
 			\label{figgraphR4}
		\end{figure}

		We have shown earlier in Corollary \ref{corShiEqui} and Proposition \ref{propBoolMatJ} that a strongly connected self-relation $R$ is shift equivalent to a matrix $J$, if $tr(R) > 0 $. But we will like to have a result for any strongly connected relations. But, first we need some definition and other results from other papers. Let $ gcd(a, b) $ be the great common divisor of $a$ and $b$. We define :
		
		\begin{equation}\label{eqGCDq}
			q = gcd(n_1, n_2, n_3, \ldots) 
		\end{equation}
		where $ n_i $ is the length of a cycle and $ i \in \mathbb{I} $ is the set of all different cycles from $R$.
		
		We obtain this proposition :
		\begin{proposition}[Proposition 6.12 in \cite{arSzym}] \label{propQdivideJ}
				Let $ R $ be a strongly connected self-relation on $X$, $(j, p)$ be the eventual period and $ q $ defined by (\ref{eqGCDq}). We have $ q | j $.
		\end{proposition}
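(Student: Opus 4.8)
The plan is to prove this through the classical structure theory of strongly connected directed graphs, applied to $G_R$: the integer $q$ of (\ref{eqGCDq}) is exactly the \emph{period} (index of imprimitivity) of $G_R$. The first step is the standard observation that, $G_R$ being strongly connected, $q$ equals not merely the greatest common divisor of the lengths of the simple cycles but also the gcd of the lengths of all closed walks through any fixed vertex $v_0$: using strong connectivity one re-routes any closed walk into a concatenation of simple cycles, so $q$ divides every closed-walk length, while conversely every simple cycle is such a walk. Equivalently, for each vertex $v$ there is a threshold above which the set of lengths of closed walks at $v$ is precisely the set of multiples of $q$.

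Second, I would build the cyclic partition of $X$. Fixing $v_0$ and declaring $x \in C_i$ when some — hence, by the first step, every — walk $v_0 \to x$ has length $\equiv i \pmod q$, strong connectivity makes this well defined, the classes $C_0, \dots, C_{q-1}$ are nonempty, and every edge of $G_R$ runs from $C_i$ to $C_{i+1 \bmod q}$. Then $R^q$ preserves each $C_i$, and its restriction to $C_i$ is strongly connected of period $1$, i.e. primitive; by the usual fact that a primitive Boolean matrix has a power equal to the full relation, there is an $N$ such that, for every $n \ge N$ and all $x, y$, one has $x\, R^n\, y$ exactly when $y$ lies in the class $i + n$, where $x \in C_i$. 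Hence for $m, n \ge N$ we get $R^m = R^n$ iff $m \equiv n \pmod q$, and the $q$ relations arising in this stable range are pairwise distinct since the classes are. As $X$ is finite, $(R^n)_{n \ge 1}$ is eventually periodic, and the foregoing pins down its period: $p = q$.

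Third comes the statement about the index. By definition $j$ is the least positive $n$ with $R^n = R^{n+p}$ for some $p$, and since a repetition propagates ($R^n = R^{n+p} \Rightarrow R^{n+1} = R^{n+1+p}$) this is the same as the least $n$ with $R^n = R^{n+q}$, i.e. the first exponent from which the power sequence is periodic. The remaining task — and the step I expect to be the genuine obstacle — is to show that this first exponent is a multiple of $q$, equivalently to reconcile the length of the transient part of $(R^n)_n$ with the graph period $q$. I would attempt this by combining the fact that $R^n$ is the class-shift-by-$n$ relation in the stable range with the distinctness of those shift relations and an injectivity property of $R \circ (-)$ on the tail of the power sequence; failing a direct argument, this is the point at which I would fall back on the primitive-component analysis and the shift-equivalence machinery of \cite{arSzym} and \cite{arShiftBool}. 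Everything preceding this is routine digraph combinatorics; the real content of $q \mid j$ sits in that final reconciliation.
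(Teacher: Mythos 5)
Your first two steps are sound and standard: identifying $q$ with the gcd of the lengths of closed walks through a fixed vertex, building the cyclic partition $C_0,\dots,C_{q-1}$ with every edge running $C_i\to C_{i+1\bmod q}$, observing that $R^q$ is primitive on each class, and concluding that in the stable range $R^n$ is the relation $\bigcup_i C_i\times C_{i+n\bmod q}$, so that the period of the power sequence is exactly $q$. But note what this establishes: $p=q$, not $q\mid j$. The step you flag as ``the genuine obstacle'' is the entire content of the proposition, so as written the proposal does not prove the statement. (There is also nothing in the paper to compare against: the proposition is imported from \cite{arSzym} without proof.)

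More importantly, your instinct that the last step resists a direct argument is correct for a structural reason: with the paper's own Definition \ref{dfnEventPeriod} of the index, the claim $q\mid j$ is false. Take $R$ to be a single directed $n$-cycle with $n\ge 2$: it is strongly connected, $R^1=R^{1+n}$, so $j=1$ and $p=n$, while $q=n$ and $n\nmid 1$. Your own analysis already predicts this --- in the stable range $R^m=R^{m'}$ iff $m\equiv m'\pmod q$, and for a pure cycle the stable range begins at $m=1$ --- so no injectivity or shift-equivalence argument can close the gap. The statement can only hold under a different convention for the index in \cite{arSzym}, or after replacing $j$ by the least multiple of $q$ that is $\ge j$; the latter costs nothing downstream, since $K_{R^j}=K_{R^{j+i}}$ for all $i\ge 0$ by Corollary \ref{corEvePerRel}, and a walk length that is a multiple of $q$ is all the final theorem's proof actually uses. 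The honest fix is to restate the proposition for that normalized exponent (or to verify the precise definitions used in \cite{arSzym}) rather than to keep searching for an argument that $q$ divides the index as defined here.
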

		
		Let's define a new equivalence relation $ \sim_q $ for a strongly connected self-relation $R$. We say that $ x \sim_q y  $, if for each $ (x, y)$-walk has length equal to $0$ modulo $q$. It is an equivalence relation.
		
		\begin{proposition}[Proposition 6.16 in \cite{arSzym}]	
			Let $ R $ be a strongly connected self-relation on $X$. Let $ q $ defined as (\ref{eqGCDq}). Then, $ \sim_q $ is an equivalence relation in $X$ with exactly $q$ distinct equivalence classes.
		\end{proposition}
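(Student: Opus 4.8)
The plan is to encode the condition defining $\sim_q$ by a single $\mathbb{Z}/q\mathbb{Z}$-valued ``height'' function on the vertices of $G_R$ and read off both assertions from it. The combinatorial input I would establish first is that \emph{every closed walk in $G_R$ has length divisible by $q$}: a closed walk decomposes into finitely many simple cycles (split off a simple cycle at the first repeated vertex and induct on length), each such simple cycle is among the cycles indexed in (\ref{eqGCDq}), so its length is a multiple of $q = gcd(n_1, n_2, \ldots)$, and summing these lengths gives the claim. Equivalently, the gcd of the lengths of all closed walks equals the gcd of the lengths of the simple cycles, so $q$ is well defined and is at least $1$ as soon as $R$ has a cycle — which it does whenever $G_R$ is strongly connected and $|X| \ge 2$; the case $|X| = 1$ is trivial (a single vertex with a loop gives $q = 1$ and one class).

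Next I would fix a base vertex $x_0 \in X$ and define $h \colon X \to \mathbb{Z}/q\mathbb{Z}$ by letting $h(x)$ be the length modulo $q$ of any $(x_0, x)$-walk; such a walk exists by strong connectedness. This is well defined: if $\ell_1, \ell_2$ are the lengths of two $(x_0, x)$-walks and $m$ is the length of some $(x, x_0)$-walk, concatenation produces two closed walks at $x_0$ of lengths $\ell_1 + m$ and $\ell_2 + m$, both divisible by $q$, hence $\ell_1 \equiv \ell_2 \pmod{q}$. The same concatenation trick shows that whenever there is an $(x,y)$-walk of length $\ell$, one has $h(y) \equiv h(x) + \ell \pmod{q}$.

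From this I would deduce that $x \sim_q y$ if and only if $h(x) = h(y)$. If $h(x) = h(y)$, then every $(x,y)$-walk has length $\equiv h(y) - h(x) = 0 \pmod q$, so $x \sim_q y$; conversely, if $x \sim_q y$, choose any $(x,y)$-walk (it exists), and observe its length is $\equiv 0$ by the definition of $\sim_q$ and also $\equiv h(y) - h(x)$, so $h(x) = h(y)$. Hence $\sim_q$ is exactly the fibre partition of $h$; in particular it is an equivalence relation, and its number of classes equals the size of the image $h(X)$.

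It remains to show $h(X) = \mathbb{Z}/q\mathbb{Z}$. We have $h(x_0) = 0$, and if $h(x) = r$ then, assuming $|X| \ge 2$ so that strong connectedness forces an edge from $x$ to some $y$, we get $h(y) = r + 1$. Thus $h(X)$ is a nonempty subset of $\mathbb{Z}/q\mathbb{Z}$ closed under adding $1$, hence all of $\mathbb{Z}/q\mathbb{Z}$; this yields exactly $q$ nonempty fibres, i.e.\ $q$ equivalence classes. The step I expect to be most delicate is the very first one — verifying that $q$, defined from all cycles, agrees with the gcd of simple-cycle lengths and that closed walks decompose accordingly — since everything afterwards is bookkeeping with $h$; this is the classical description of the period and cyclic classes of a strongly connected digraph, so one could alternatively invoke Boolean-matrix theory \cite{boolMatTheory}, but the self-contained argument above is short.
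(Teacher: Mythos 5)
Your argument is correct, but note that the paper does not actually prove this statement: it is imported verbatim as Proposition 6.16 of \cite{arSzym}, so there is no internal proof to compare against. Your self-contained proof via the height function $h \colon X \to \mathbb{Z}/q\mathbb{Z}$ is the standard and clean way to establish it: the key lemma that every closed walk has length divisible by $q$, the well-definedness of $h$ by the concatenation trick, the identification of $\sim_q$ with the fibre partition of $h$, and the surjectivity of $h$ via closure under adding $1$ all check out. Two small observations. First, the step you flag as most delicate is actually easier under this paper's conventions: a ``cycle'' is defined here as any sequence $x_1,\ldots,x_n$ with $x_1 = x_n$ and $x_i R x_{i+1}$, i.e.\ any closed walk, so the gcd in (\ref{eqGCDq}) already ranges over all closed-walk lengths and no decomposition into simple cycles is needed (though your decomposition argument is what one wants if one prefers to compute $q$ from simple cycles only). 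Second, your treatment of the degenerate cases is appropriately careful: the identification $x \sim_q y \Leftrightarrow h(x)=h(y)$ genuinely uses strong connectedness to produce at least one $(x,y)$-walk (otherwise $\sim_q$ would be vacuously satisfied between unreachable vertices), and the statement implicitly assumes $R$ has at least one cycle so that $q$ is defined; both points are worth the sentence you give them.
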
	
			
		We need one more Lemma before showing our final result.
		
		\begin{lemma}[Lemma 6.25 in \cite{arSzym}]\label{lem625}
			Let $ R $ be a strongly connected self-relation on $X$ and $ (j, p) $ an eventual period. Then,
			\begin{equation}
				x \sim_q x' \implies R^j(x) = R^j(x').
			\end{equation}			 
		\end{lemma}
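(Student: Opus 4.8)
The plan is to prove the equality $R^j(x)=R^j(x')$ by establishing the single inclusion $R^j(x)\subseteq R^j(x')$; since $\sim_q$ is an equivalence relation (in particular symmetric, so $x'\sim_q x$), applying that inclusion with the roles of $x$ and $x'$ exchanged yields the reverse one. Recall that $z\in R^n(u)$ means exactly that there is a walk of length $n$ from $u$ to $z$ in $G_R$. So fix $z\in R^j(x)$ and a walk $W$ from $x$ to $z$ of length $j$.

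Because $R$ is strongly connected there is a walk from $x'$ to $x$, and by the hypothesis $x'\sim_q x$ its length $\ell_0$ is a multiple of $q$. I would then argue that the set of lengths of walks from $x'$ to $x$ contains every sufficiently large multiple of $q$: starting from a fixed such walk one may insert closed walks based at $x$, and — this is the key combinatorial input, coming from the definition $q=\gcd(n_1,n_2,\ldots)$ in (\ref{eqGCDq}) together with strong connectivity — the lengths of closed walks based at $x$ eventually realise every sufficiently large multiple of $q$. Since $\ell_0\in q\mathbb{N}$, adding such closed walks produces walks from $x'$ to $x$ of every sufficiently large length in $q\mathbb{N}$; in particular there is one of length $\ell$ with $\ell$ a large multiple of $\mathrm{lcm}(p,q)$, so that $p\mid\ell$.

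Concatenating a walk from $x'$ to $x$ of length $\ell$ with $W$ gives a walk from $x'$ to $z$ of length $\ell+j$. As $\ell+j\ge j$ and $p\mid\ell$, repeated use of $R^{j}=R^{j+p}$ gives $R^{\ell+j}=R^{j}$, hence $z\in R^{\ell+j}(x')=R^{j}(x')$. This proves $R^j(x)\subseteq R^j(x')$, and with it the lemma.

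The step I expect to be the real work is the claim that in a strongly connected $G_R$ the closed walks based at a vertex eventually have every sufficiently large multiple of $q$ as a length — equivalently, that the length of a walk joining two fixed vertices can be increased by an arbitrary large multiple of $q$ while keeping its endpoints. This is the only place where strong connectivity and the definition of $q$ are genuinely used; everything else is bookkeeping with the periodicity $R^j=R^{j+p}$ and plain concatenation of walks. A shorter alternative would be to first show that the period satisfies $p=q$ for strongly connected $R$ (using the cyclic decomposition of $X$ into the $q$ classes of $\sim_q$ together with $R^{j}=R^{j+p}$); once that is known, $R^{j+cq}=R^{j}$ for all $c\ge 0$, and one may simply take $\ell=\ell_0$ in the argument above.
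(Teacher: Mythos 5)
Your argument is correct, but note that the paper does not prove this lemma at all: it is imported verbatim as Lemma~6.25 of \cite{arSzym}, so there is no internal proof to compare against. What you supply is a valid self-contained derivation. The one step you rightly flag as ``the real work'' --- that in a strongly connected $G_R$ the closed walks based at a fixed vertex $x$ realise every sufficiently large multiple of $q$ --- is true and standard, and your sketch of it is sound; to make it airtight you would record that the set $S_x$ of closed-walk lengths at $x$ is closed under concatenation (hence under addition), that $q \mid \gcd(S_x)$ because every closed walk decomposes into cycles, that $\gcd(S_x) \mid n_i$ for each cycle length $n_i$ by routing from $x$ to the cycle and back (so $\gcd(S_x)=q$), and finally that a numerical semigroup with gcd $q$ contains all large multiples of $q$. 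The remaining bookkeeping is fine: symmetry of $\sim_q$ gives that every $(x',x)$-walk has length in $q\mathbb{N}$, choosing $\ell$ a large multiple of $\mathrm{lcm}(p,q)$ makes $R^{j+\ell}=R^{j}$ via iterated use of $R^{j}=R^{j+p}$, and concatenation yields $z\in R^{j}(x')$. Your proposed shortcut via $p=q$ for strongly connected relations is also a genuine theorem of Boolean matrix theory, but proving it is at least as much work as the semigroup claim, so the first route is the more economical one to write out.
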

		
		We are going to show that the number of class equivalence of $ \sim_q $ is equal to $ \dim(H_0(K_{R^j})) $ for a strongly connected self-relation with eventual period $(j, p)$.
		
		\begin{theorem}
			Let $ R $ be a finite self-relation on $X$ with an eventual period $(j, p)$, $ R $ is strongly connected, $ q $ defined by (\ref{eqGCDq}). Then, we have :
				
			\begin{equation}\label{eqThStrCon}
				\text{number of } [x]_{\sim_q} = q = \dim(H_0(K_{R^j})) = \dim(H_0(L_{R^j})).
			\end{equation}
		\end{theorem}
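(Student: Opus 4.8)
The plan is to reduce everything to counting the edge-connected components of $K_{R^j}$, and then to identify that partition of $X$ with the partition into $\sim_q$-classes. First I would dispose of the formal parts. Since $R$ is strongly connected, every vertex lies on a closed walk, so $\Dom R = X = \Ima R$, hence $\Dom R^j = X = \Ima R^j$; in particular $[x]\in K_{R^j}$ for every $x\in X$, so the vertex set of $K_{R^j}$ (and of $L_{R^j}$) is all of $X$. The middle equality ``$\#[x]_{\sim_q}=q$'' is exactly Proposition~6.16 of \cite{arSzym}, and $\dim H_0(K_{R^j})=\dim H_0(L_{R^j})$ is Dowker's Theorem~\ref{thmDowker}. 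Since $\dim H_0$ of a finite complex counts its connected $=$ edge-connected components, it remains to show $K_{R^j}$ has exactly $q$ edge-components, which I will get by proving that the edge-components of $K_{R^j}$ are precisely the classes of $\sim_q$.

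For the inclusion ``each $\sim_q$-class lies in one edge-component'': if $x\sim_q x'$, then Lemma~\ref{lem625} gives $R^j(x)=R^j(x')$, a nonempty set since $\Dom R^j=X$; choosing $y$ in it yields $xR^jy$ and $x'R^jy$, so $[x,x']\in K_{R^j}$, and $x,x'$ are in the same edge-component. For the reverse inclusion it suffices to show that $[x,x']\in K_{R^j}$ forces $x\sim_q x'$: then, by induction along an edge-path, any two vertices of a common edge-component are $\sim_q$-equivalent, which combined with the first inclusion makes the two partitions coincide. To prove this, pick $y$ with $xR^jy$ and $x'R^jy$, giving walks $x\to y$ and $x'\to y$ of length $j$; by strong connectedness there is a walk $y\to x'$ of some length $m$, so $x'\to y\to x'$ is a closed walk of length $j+m$, whence $q\mid(j+m)$. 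Since $q\mid j$ by Proposition~\ref{propQdivideJ}, we get $q\mid m$ and therefore the concatenated walk $x\to y\to x'$ has length $j+m\equiv 0\pmod q$; as all walks between a fixed ordered pair of vertices of a strongly connected digraph have the same length modulo $q$, every $(x,x')$-walk has length $\equiv 0\pmod q$, i.e.\ $x\sim_q x'$.

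Combining the two inclusions, $K_{R^j}$ has exactly as many edge-components as $\sim_q$ has classes, namely $q$, so $\dim H_0(K_{R^j})=q$, and Dowker's Theorem transfers this to $L_{R^j}$; this gives all the equalities in \eqref{eqThStrCon}.

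The one non-formal ingredient is the classical digraph fact invoked twice above: the period $q$ (the $\gcd$ of the cycle lengths) divides the length of every closed walk, and in a strongly connected digraph any two walks with the same endpoints have equal length modulo $q$. I expect this to be the point requiring care, and I would either cite it from the periodicity theory of irreducible nonnegative matrices or insert a short lemma: a closed walk decomposes into cycles, so its length is a sum of cycle lengths and hence divisible by $q$; and for two $u\to v$ walks $W_1,W_2$ and any $v\to u$ walk $W'$, the closed walks $W_1W'$ and $W_2W'$ have lengths divisible by $q$, so $\ell(W_1)\equiv\ell(W_2)\pmod q$.
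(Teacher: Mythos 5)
Your proof is correct and follows essentially the same route as the paper: both identify the edge-connected components of $K_{R^j}$ with the $\sim_q$-classes, using Lemma \ref{lem625} for one direction and Proposition \ref{propQdivideJ} plus the walk-length-mod-$q$ argument for the other. The only cosmetic difference is that you prove the second direction in contrapositive form (an edge $[x,x']$ forces $x\sim_q x'$) where the paper shows $x\not\sim_q y$ implies $R^j(x)\cap R^j(y)=\emptyset$, and you are somewhat more explicit about the classical fact that all walks between a fixed ordered pair have the same length modulo $q$ — a fact the paper also uses implicitly.
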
				
		\begin{proof}
			We show that for any $ x, y \in X$, if  $ x \sim_q y  $, then $x$ and $y$ are edge-connected and if $ x \not\sim_q y $, then $ x $ and $y $ are not edge-connected.
			
		First, we suppose that $ x \sim_q y $. By Lemma \ref{lem625}, we have that $ R^j(x) = R^j(y) \neq \emptyset $. There exists a $ z \in R^j(x) $. It implies that $ [x, z] $ and $ [y, z] $ are in $ K_{R^j}$. So, each vertex in the same equivalence class is edge-connected. 
		
		Now, we suppose that, $ x \not\sim_q y $. There exists a $(x, y) $-walk of length $n$ modulo $ q $ where $ n \neq 0 $. Let show that $R^j(x) \cap R^j(y) = \emptyset$ . Let's suppose there exists a $ z \in R^j(x) \cap R^j(y) $. This implies there exists a $(x, z)$-walk of length $ j $ and a $(y, z) $-walk of length $j$. But, from Proposition \ref{propQdivideJ}, $ q | j $. This implies $ x \sim_q z $ and $ y \sim_q z $. But $ \sim_q $ is an equivalence relation. We obtain that $ x \sim_q y $ which is a contradiction. We obtain that $ x \not\sim _q y $ implies $ R^j(x) \cap R^j(y) = \emptyset $. We obtain that if $ x \sim_q y $ then they are edge-connected. But, if $ x \not\sim_q  y $, then they are not edge-connected. There is $q$ different equivalence classes. The proof is complete.
		\end{proof}
		
		For the case of $ R $ is an arbitrary relation, it is harder to find its homology $H_0$. Also, for higher dimensions of the homology groups, it's hard to tell what happens. Further investigations are needed for both cases.

		Now, we return to the filtrations defined earlier. There are two other types of filtrations that we can use. If $R$ is a self-relation on $X$ with $ \Dom R = X = \Ima R$, then we can use both filtrations. But we obtain the same bar codes for both. That holds because, for each $ i \in \mathbb{N}$, $ |K_{R^i}| $ is homotopically equivalent to $ |L_{R^i}| $ for any self-relation. We might need to come with other types of filtration. We suggest two other types of filtration. 
		
		It will be interesting to use a zigzag filtration \cite{arTdaZigZag} with $ K_{R^i} $ and $ L_{R^i} $ by alternating them. It will probably depend on the relation. Further investigations are needed. 
		
		We will present an interesting bi-filtration with $ K_{R^m} $ and $ L_{R^n} $. We have that, if $ K_{R_{m}} \subset K_{R_{m + 1}} $ and $ L_{R^n} \subset L_{R^{n+1}} $, then $ K_{R^m} \cap L_{R^n} \subset K_{R^{m+1}} \cap L_{R^{n}}  $ and $ K_{R^m} \cap L_{R^n} \subset K_{R^m} \cap L_{R^{n+1}} $ for all $ m, n \in \mathbb{N} $. We obtain this bi-filtration :

		\begin{center}
			\begin{tikzcd}
				& \vdots \arrow[hookrightarrow]{d} & \vdots \arrow[hookrightarrow]{d} & \\
				\ldots \arrow[hookrightarrow]{r} 
				& K_{R^m} \cap L_{R^n} \arrow[hookrightarrow]{r}  \arrow[hookrightarrow]{d} 
				& K_{R^{m+1}} \cap L_{R^n} \arrow[hookrightarrow]{r} \arrow[hookrightarrow]{d}
				& \ldots  \\
				\ldots \arrow[hookrightarrow]{r} & K_{R^m} \cap L_{R^{n+1}} \arrow[hookrightarrow]{r} \arrow[hookrightarrow]{d}
				& K_{R^{m+1}} \cap L_{R^{n+1}} \arrow[hookrightarrow]{r} \arrow[hookrightarrow]{d}
				& \ldots \\
				& \vdots & \vdots  &
			\end{tikzcd}
					
		\end{center}
		 
		The computation of the bi-filtration is also finite. Because the relation $ R $ is finite and $ \Dom R = X = \Ima R $. We obtain an eventual period $(j, p)$. In the bi-filtration, there are, at maximum, $ j^2 $ different simplicial complexes to compute.
		
		One may ask why the intersection is a good idea to consider. Let's explain it in more details. Let $ R $ be a self-relation on $X$ with $ \Dom R = X = \Ima R $ and $ m, n \in \mathbb{N} $. Let $ \sigma \in K_{R^m} \cap L_{R^n} $ where $ \sigma = [x_1, x_2, \ldots, x_d] $.  Then, there exists $ x_{\omega} \in X $ such that $ x_i R^{m} x_{\omega} $ and there exists $ x_{\alpha} \in X $ such that $ x_{\alpha} R^n x_i$ for all $ i $. Another way to see this is, for each $ x_i \in \sigma $, there exists a $(x_{\omega}, x_{\alpha})$-walk of length $ m + n $ passing through $ x_i $. We can subdivide this $( x_\omega, x_\alpha)$-walk into a $(x_\omega, x_i) $-walk of length $m$ and a $(x_i, x_\alpha)$-walk of length $n$. So, by only the existence of a simplex $ \sigma $ in $ K_{R^m} \cap L_{R^n} $, each vertex of $ \sigma $, it has a walk with a common starting point and a common ending point of same length going through the vertex. It will be interesting to study these Dowker complexes when $ m $ tends to infinity, $n$ tends to infinity or both.

	\section{Conclusion}
		
		In summary, we used the Dowker complexes to study some properties of self-relation. First, we defined the right morphism and left morphism. We also generalized it to the case of multivalued maps called multi-right morphism and multi-left morphism. The existence of a right or multi-right morphism between $R_1$ and $R_2$ implies that the $K_{R_1}$ is included in $ K_{R_2} $. Similarly, the existence of a left or a multi-left morphism between $ R_1 $ and $ R_2 $ implies that $ L_{R_1} $ is included in $ L_{R_2} $. We have shown that two relations are conjugate implies that they have homotopically equivalent Dowker complexes. We have also shown that if two relations are shift equivalent, then their Dowker complexes are homotopically equivalent at some power of the relations. We were interested in self-relation which is equivalent of a type of directed graph. We have obtained two nice properties. If $R$ is finite and $ \Dom R = X = \Ima R $, then we have that $ K_{R^i} \leq K_{R^{i+1}} $ and $ L_{R^i} \leq L_{R^{i+1}} $ for all $i \in \mathbb{N}$. Moreover, there exists a $ j \in \mathbb{N} $ such as $ K_{R^j} = K_{R^{i+j}} $ and $ L_{R^j} = L_{R^{i+j}} $ for all $ i \in \mathbb{N} $. With these two properties, we defined two filtrations with $ K_{R^i} \hookrightarrow K_{R^{i + 1}} $ and $ L_{R^i} \hookrightarrow L_{R^{i + 1}} $. Also, the filtration ends at some finite time. Finally, we proved some results about the $0$th homology for some types of self-relation at some power. We also proposed the intersection filtration and the zigzag filtration.
		
		We have put some foundations to study directed graph using Dowker complexes. Moreover, we think it might be a useful tool to study the dynamics of finite data define by a directed graph of Dowker complexes. Let $ R $ be a self-relation with an eventual period $(j, p) $. The positive or forward invariant is given by the existence of a simplex in $ K_{R^j} $ when $ j $ converges to infinity. The negative or backward invariant is given by the existence of a simplex in $ L_R^j $ when $j $ converges to infinity. But, by the stabilization of Dowker complexes, we can compute it in finite time. Finally, if we want to study the invariant of $R$, this is given by the existence of a simplex in $ K_{R^j} \cap L_{R^j} $ which is the intersection of the forward and the backward invariant. But further investigations are needed. The idea is to use the structure of the Dowker complexes to encode the dynamics of finite data.


	\clearpage
	\typeout{}
	\bibliography{biblio}

\begin{thebibliography}{10}

\bibitem{arDowTopoDifTest}
K.~Ambrose, S.~Huntsman, M.~Robinson, and M.~Yutin.
\newblock Topological differential testing.
\newblock {\em CoRR}, abs/2003.00976, 2020.

\bibitem{arQanalAtkin}
R.~Atkin.
\newblock Mathematical {S}tructure in {H}uman {A}ffairs.
\newblock {\em London, Heinemann}, 1974.

\bibitem{arQanalAppSoc}
R.~Atkin.
\newblock Q-analysis: A hard language for the soft sciences.
\newblock {\em Futures, Heinemann}, 10(6), 1978.

\bibitem{arBjorner}
A.~Björner.
\newblock Topological methods.
\newblock {\em Handb. Comb.}, 2:1819--1872, 1995.

\bibitem{arTdaZigZag}
G.~Carlsson and V.~de~Silva.
\newblock Zigzag persistence.
\newblock {\em Found Comput Math}, 10, 2010.

\bibitem{arDowThPersHomo}
S.~Chowdhurry and F.~Mémoli.
\newblock A functorial {D}owker theorem and persistent homology of asymmetric
  networks.
\newblock {\em Journal of Applied and Computational Topology}, 2:115--175,
  2018.

\bibitem{boConleyIndex}
C.~Conley.
\newblock {\em {I}solated {I}nvariant {S}ets and the {M}orse {I}ndex}.
\newblock American Mathematical Society, 1978.

\bibitem{arAppDataMv}
S.~Day, O.~Junge, and K.~Mischaikow.
\newblock A rigorous numerical method for the global analysis of
  infinite-dimensional discrete dynamical systems.
\newblock {\em SIAM J. Applied Dynamical Systems}, 3(2):117--160, 2004.

\bibitem{arFinDataVecDom}
D.~Desjardins~Côté.
\newblock From finite vector field data to combinatorial dynamical systems in
  the sense of forman.
\newblock {\em arXiv}, 2021.

\bibitem{arDowkerCmp}
C.~Dowker.
\newblock Homology groups of relations.
\newblock {\em Annals of Mathematics}, pages 84--95, 1952.

\bibitem{boCompTopo}
H.~Edelsbrunner and J.~L. Harer.
\newblock {\em {C}omputational {T}opology : {A}n {I}ntroduction}.
\newblock American Mathematical Society, 2010.

\bibitem{topoPrivacy}
M.~Erdmann.
\newblock Topology of privacy: Lattice structures and information bubbles for
  inference and obfuscation.
\newblock {\em arXiv}, 2017.

\bibitem{RobForCVF}
R.~Forman.
\newblock Combinatorial vector fields and dynamical systems.
\newblock {\em Mathematische Zeitschrift}, 228:629–681, 1998.

\bibitem{topoLandSignal}
R.~Ghrist, D.~Lipsky, J.~Derenick, and A.~Speranzon.
\newblock Topological landmark-based navigation and mapping.
\newblock
  \url{https://www2.math.upenn.edu/~ghrist/preprints/landmarkvisibility.pdf}.
\newblock 2012.

\bibitem{arShiftBool}
K.~Hang~Kim and F.~W.~Roush.
\newblock On strong shift equivalence over a boolean semiring.
\newblock {\em Ergod. Th. and Dynam. Sys.}, 6:81--97, 1986.

\bibitem{boCompHomo}
T.~Kaczynski, K.~Mischaikow, and M.~Mrozek.
\newblock {\em Computational {H}omology}.
\newblock Springer, 2004.

\bibitem{arMvMap}
T.~Kaczynski and M.~Mrozek.
\newblock Conley index for discrete multivalued dynamical systems.
\newblock {\em Topology and Its Appl.}, 65:83--96, 1995.

\bibitem{towFormalTie}
T.~Kaczynski, M.~Mrozek, and T.~Wanner.
\newblock Towards a formal tie between combinatorial and classical vector field
  dynamics.
\newblock {\em Journal of Computational Dynamics}, 3(1):17--50, 2016.

\bibitem{arLatAtt1}
W.~D. Kalies, K.~Mischaukow, and R.~C.A.M Vandervorst.
\newblock Lattice structures for attractors i.
\newblock {\em Journal of Computational Dynamics}, 1(2):307--338, 2014.

\bibitem{arLatAtt2}
W.~D. Kalies, K.~Mischaukow, and R.~C.A.M Vandervorst.
\newblock Lattice structures for attractors ii.
\newblock {\em Foundations of Computational Mathematics}, 16:1151--1191, 2016.

\bibitem{arLatAtt3}
W.~D. Kalies, K.~Mischaukow, and R.~C.A.M Vandervorst.
\newblock Lattice structures for attractors iii.
\newblock {\em Journal of Dynamics and Differential Equations}, 34:1729--1768,
  2022.

\bibitem{boolMatTheory}
K.~H. Kim.
\newblock {\em {B}oolean {M}atrix {T}heory and {A}pplications}, volume
  (Monographs and textbooks in pure and applied mathematics, v. 70).
\newblock New York:Dekker, 1982.

\bibitem{arMulVecConMorFor}
M.~Lipiński, J.~Kubica, M.~Mrozek, and T.~Wanner.
\newblock Conley-{M}orse-{F}orman theory for generalized combinatorial
  multivector fields on finite topological spaces.
\newblock {\em arXiv:1911.12698 [math.DS]}, pages 1--44, 2020.

\bibitem{arGeoRel}
G.~Minian~Elias.
\newblock The geometry of relations.
\newblock {\em Order}, 27:213--224, 2010.

\bibitem{arCombVsClass}
M.~Mrozek, R.~Srzednicki, Thorpe J., and Th. Wanner.
\newblock Combinatorial vs classical dynamics : Recurrence.
\newblock {\em Commun. Nonlinear Sci. Numer. Simul.}, 108(106226):1--30, 2022.

\bibitem{arSzym}
Marian Mrozek and Mateusz Przybylski.
\newblock The szymczak functor on the category of finite sets and finite
  relations.
\newblock {\em arXiv}, 2022.

\bibitem{AlgTopo}
J.~R. Munkres.
\newblock {\em {E}lements of {A}lgebraic {T}opology}.
\newblock Addison-Weslay, Cambridge, 1984.

\bibitem{arCoshefDow}
M.~Robinson.
\newblock Cosheaf representations of relations and {D}owker complexes.
\newblock {\em Journal of Applied and Computational Topology}, 6:27--63, 2022.

\bibitem{arMainSzym}
A.~Szymczak.
\newblock The {C}onley index for disccrete dynamical system.
\newblock {\em Topology Appl.}, 66:215--240, 1995.

\bibitem{arPHDSzym}
A.~Szymczak.
\newblock Index {P}airs : {F}rom {D}ynamics to {C}ombinatorics and {B}ack.
\newblock {\em Ph.D. thesis, Georgia Inst. Tech.}, 1999.

\end{thebibliography}

\end{document}